\documentclass[11pt]{amsart}
\usepackage{amsmath}
\usepackage{amsthm}
\usepackage{amssymb}
\usepackage{verbatim}
\usepackage{hyperref}
\usepackage{color}
\usepackage{cite}

\begin{document}

\newtheorem{theorem}{Theorem}    
\newtheorem{proposition}[theorem]{Proposition}
\newtheorem{conjecture}[theorem]{Conjecture}
\def\theconjecture{\unskip}
\newtheorem{corollary}[theorem]{Corollary}
\newtheorem{lemma}[theorem]{Lemma}
\newtheorem{sublemma}[theorem]{Sublemma}
\newtheorem{observation}[theorem]{Observation}
\newtheorem{remark}[theorem]{Remark}
\newtheorem{definition}[theorem]{Definition}
\theoremstyle{definition}
\newtheorem{notation}[theorem]{Notation}
\newtheorem{question}[theorem]{Question}
\newtheorem{questions}[theorem]{Questions}
\newtheorem{example}[theorem]{Example}
\newtheorem{problem}[theorem]{Problem}
\newtheorem{exercise}[theorem]{Exercise}

\numberwithin{theorem}{section} \numberwithin{theorem}{section}
\numberwithin{equation}{section}

\def\earrow{{\mathbf e}}
\def\rarrow{{\mathbf r}}
\def\uarrow{{\mathbf u}}
\def\varrow{{\mathbf V}}
\def\tpar{T_{\rm par}}
\def\apar{A_{\rm par}}

\def\reals{{\mathbb R}}
\def\torus{{\mathbb T}}
\def\heis{{\mathbb H}}
\def\integers{{\mathbb Z}}
\def\naturals{{\mathbb N}}
\def\complex{{\mathbb C}\/}
\def\distance{\operatorname{distance}\,}
\def\support{\operatorname{support}\,}
\def\dist{\operatorname{dist}\,}
\def\Span{\operatorname{span}\,}
\def\degree{\operatorname{degree}\,}
\def\kernel{\operatorname{kernel}\,}
\def\dim{\operatorname{dim}\,}
\def\codim{\operatorname{codim}}
\def\trace{\operatorname{trace\,}}
\def\Span{\operatorname{span}\,}
\def\dimension{\operatorname{dimension}\,}
\def\codimension{\operatorname{codimension}\,}
\def\nullspace{\scriptk}
\def\kernel{\operatorname{Ker}}
\def\ZZ{ {\mathbb Z} }
\def\p{\partial}
\def\rp{{ ^{-1} }}
\def\Re{\operatorname{Re\,} }
\def\Im{\operatorname{Im\,} }
\def\ov{\overline}
\def\eps{\varepsilon}
\def\lt{L^2}
\def\diver{\operatorname{div}}
\def\curl{\operatorname{curl}}
\def\etta{\eta}
\newcommand{\norm}[1]{ \|  #1 \|}
\def\expect{\mathbb E}
\def\bull{$\bullet$\ }
\def\C{\mathbb{C}}
\def\R{\mathbb{R}}
\def\Rn{{\mathbb{R}^n}}
\def\Sn{{{S}^{n-1}}}
\def\M{\mathbb{M}}
\def\N{\mathbb{N}}
\def\Q{{\mathbb{Q}}}
\def\Z{\mathbb{Z}}
\def\F{\mathcal{F}}
\def\L{\mathcal{L}}
\def\S{\mathcal{S}}
\def\supp{\operatorname{supp}}
\def\dist{\operatorname{dist}}
\def\essi{\operatornamewithlimits{ess\,inf}}
\def\esss{\operatornamewithlimits{ess\,sup}}
\def\xone{x_1}
\def\xtwo{x_2}
\def\xq{x_2+x_1^2}
\newcommand{\abr}[1]{ \langle  #1 \rangle}

\newcommand{\Norm}[1]{ \left\|  #1 \right\| }
\newcommand{\set}[1]{ \left\{ #1 \right\} }
\def\one{\mathbf 1}
\def\whole{\mathbf V}
\newcommand{\modulo}[2]{[#1]_{#2}}
\renewcommand{\thefootnote}{\fnsymbol{footnote}}
	\def\scriptf{{\mathcal F}}
	\def\scriptg{{\mathcal G}}
	\def\scriptm{{\mathcal M}}
	\def\scriptb{{\mathcal B}}
	\def\scriptc{{\mathcal C}}
	\def\scriptt{{\mathcal T}}
	\def\scripti{{\mathcal I}}
	\def\scripte{{\mathcal E}}
	\def\scriptv{{\mathcal V}}
	\def\scriptw{{\mathcal W}}
	\def\scriptu{{\mathcal U}}
	\def\scriptS{{\mathcal S}}
	\def\scripta{{\mathcal A}}
	\def\scriptr{{\mathcal R}}
	\def\scripto{{\mathcal O}}
	\def\scripth{{\mathcal H}}
	\def\scriptd{{\mathcal D}}
	\def\scriptl{{\mathcal L}}
	\def\scriptn{{\mathcal N}}
	\def\scriptp{{\mathcal P}}
	\def\scriptk{{\mathcal K}}
	\def\frakv{{\mathfrak V}}

	\allowdisplaybreaks
	
	\arraycolsep=1pt
	%
	\newtheorem*{remark0}{\indent\sc Remark}
	%
	\renewcommand{\proofname}{Proof.} 
	
\title[BMO-Type characterizations and endpoint estimates]
{BMO-Type characterizations and endpoint estimates for commutators of
intrinsic square functions on Orlicz--Hardy spaces}

	\author[Yanyan Han]{Yanyan Han}
\address{Yanyan Han: School of Information Network Security\\People's Public Security
         University of China\\ Beijing 100038\\ People's Republic of China}
\email{hanyanyan$\_$bj@163.com}

\author[Feng Liu]{Feng Liu$^*$}
\address{Feng Liu:
        College of Mathematics and System Science\\
        Shandong University of Science and Technology\\
        Qingdao, Shandong 266590\\
        People's Republic of China}
\email{FLiu@sdust.edu.cn}

\author[Yongming Wen]{Yongming Wen}
\address{Yongming Wen:
       School of Mathematics and Statistics\\Minnan Normal University\\ Zhangzhou 363000\\
         People's Republic of China}
\email{wenyongmingxmu@163.com}
	
	\author[Huoxiong Wu]{Huoxiong Wu}
	\address{Huoxiong Wu: School of Mathematical Sciences\\ Xiamen University\\
		Xiamen 361005\\	People's Republic of China}
	\email{huoxwu@xmu.edu.cn}

	\keywords{Intrinsic square function, commutator, Musielak--Orlicz Hardy spaces, BMO-type spaces, endpoint estimates, Orlicz--Hardy spaces.\\
		\thanks{$^*$Corresponding author.}
		\indent{2020 Mathematics Subject Classification.} 47B47, 42B20, 42B30, 42B35.}


\begin{abstract}

The commutators of intrinsic square functions associated with the intrinsic Littlewood--Paley $g$-function, the intrinsic $g_{\lambda}^{*}$-function and the intrinsic Lusin area function can be used to characterize BMO-type function spaces through their boundedness properties. In this paper, we show that, for $b\in {\rm BMO}(\mathbb{R}^{n})$, the boundedness of each of these commutators from the Musielak--Orlicz Hardy space $H^{\varphi}(\mathbb{R}^{n})$ to $L^{\varphi}(\mathbb{R}^{n})$ is equivalent to $b\in {\rm BMO}_{\varphi}(\mathbb{R}^{n})$ (a nontrivial subspace of $\rm{BMO}(\mathbb{R}^{n})$), under suitable assumptions on the growth function $\varphi$. This extends the weighted characterization of Han and Wu [Proc. Amer. Math. Soc. 152(1) (2024), 281--293] to the Musielak--Orlicz setting.  In addition, under suitable assumptions on a growth function $\Phi$, we prove that, for $b\in {\rm BMO}_{\Phi}(\mathbb{R}^{n})$, these commutators are bounded from the $b$-adapted Orlicz--Hardy space $H_{b}^{\Phi}(\mathbb{R}^{n})$ to $L^{1}(\mathbb{R}^{n})$ and from Orlicz Hardy space $H^{\Phi}(\mathbb{R}^{n})$ to $L^{1,\infty}(\mathbb{R}^{n})$.
\end{abstract}

\maketitle

\section{Introduction}

In this paper we are concerned with the BMO-type function space
characterizations and endpoint estimates for commutators of intrinsic square
functions on Orlicz--Hardy spaces.
It is well known that square functions are fundamental objects in Littlewoo--Paley theory, a cornerstone of the harmonic analysis of function spaces, which provides a comprehensive and unifying perspective on norm inequalities and function-space characterizations; see \cite{CY,CXY,WB,SXY,XYY} and the references therein.
Recently, the boundedness of commutators associated with intrinsic square functions on various function spaces has also been investigated by many authors; see \cite{HW,Wang2012,
GOS,LNYZ}.
In particular, the study of BMO-type function space characterizations via the boundedness of commutators of various classical operators on Hardy spaces has been the subject of many recent articles (see \cite{HK,HW,LKY}). Inspired by the results in \cite{HK,HW}, this paper is devoted to providing some new characterizations of a class of ${\rm BMO}$-type function spaces introduced by Huy and Ky \cite{HK} via the boundedness of commutators of
intrinsic square functions including the intrinsic Littlewood--Paley $g$-function,
the intrinsic $g_{\lambda}^{*}$-function and the intrinsic Lusin area function.
In addition, we establish new endpoint estimates for these commutators on Orlicz--Hardy spaces.

\subsection{BMO-Type characterizations of commutators of intrinsic square functions
on Hardy spaces}

The intrinsic square function originates from Wilson \cite{W} who first
studied the weighted $L^p$ boundedness. These operators not only
play a key role in the study of a conjecture of Fefferman and Stein
concerning a weighted $L^{2}$ estimate for the Lusin area function, but also dominate the classical Littlewood--Paley square functions pointwise (see \cite{WB}). For $0<\alpha\leq1$, let $\mathcal{C}_\alpha$
be the family of functions $\phi$ defined on $\mathbb{R}^n$ such that
$\supp \phi\subset B(0,1),$ $\int_{\mathbb{R}^n}\phi(x)dx=0$ and for
all $x,\,x'\in \mathbb{R}^n$,
$$|\phi(x)-\phi(x')|\leq |x-x'|^\alpha.$$
For $(y,t)\in \mathbb{R}^{n+1}_+$ and $f\in L_{\rm loc}(\mathbb{R}^n)$
we set
$$A_\alpha(f)(y,t)=\sup_{\phi\in \mathcal C_\alpha}|f*\varphi_t(y)|.$$
The intrinsic Lusin area function is defined by
$$S_\alpha(f)(x):=\Big(\iint_{\Gamma(x)}[A_\alpha(f)(y,t)]^2\frac{dy\,dt}{t^{n+1}}\Big)^{1/2},$$
where $\Gamma(x):=\Big\{(y,t)\in\mathbb{R}^{n+1}_+:\ |x-y|<t\Big\}$. The
intrinsic Littlewood--Paley $g$-function is defined by
$$g_\alpha(f)(x):=\Big(\int_0^\infty[A_\alpha(f)(x,t)]^2\frac{dt}{t}\Big)^{1/2},$$
while the intrinsic Littlewood--Paley $g_{\lambda}^*$-function is given by
$$g_{\lambda,\alpha}^*(f)(x):=\left(\iint_{\mathbb{R}^{n+1}_+}\left(\frac{t}{t+|x-y|}\right)^{\lambda n}[A_\alpha(f)(y,t)]^2\frac{dy\,dt}{t^{n+1}}\right)^{1/2}.$$
Over the last several years the mapping properties of the above intrinsic
square functions on various function spaces have been studied by many authors.
For example, see \cite{Lerner,WL} for weighted Lebesgue spaces,
\cite{HL,Wang2012,GOS,LY} for weighted Hardy spaces, Morrey spaces and
Musielak--Orlicz Morrey spaces. Meanwhile, the intrinsic square functions
also play a key role in characterizing various Hardy-type spaces (see
\cite{HL,LY,WL}).

The commutators of the intrinsic Lusin area function, the intrinsic Littlewood--Paley $g$-function, and the intrinsic Littlewood--Paley $g_{\lambda}^{*}$-function are defined, respectively, by
$$S_{\alpha,b}(f)(x):=\Big(\iint_{\Gamma(x)}\sup_{\phi\in\mathcal{C}_\alpha}
\Big|\int_{\mathbb{R}^n}[b(x)-b(z)]\phi_t(y-z)f(z)\,dz\Big|^2\frac{dy\,dt}{t^{n+1}}
\Big)^{1/2},$$
$$g_{\alpha,b}(f)(x):=\Bigg(\int_0^\infty\sup_{\phi\in\mathcal{C}_\alpha}\Big|
\int_{\mathbb{R}^n}[b(x)-b(y)]\phi_t(x-y)f(y)\,dy\Big|^2\frac{dt}{t}\Bigg)^{1/2},$$
and
$$g_{\alpha,b}^{\lambda,*}(f)(x):=\Bigg(\iint_{\mathbb{R}^{n+1}_+}(\frac{t}{t+|x-y|})^{\lambda n}\sup_{\phi\in\mathcal{C}_\alpha}\Big|\int_{\mathbb{R}^n}
[b(x)-b(z)]\phi_t(y-z)f(z)\,dz\Big|^2\frac{dy\,dt}{t^{n+1}}\Bigg)^{1/2}.$$
Recently, the boundedness of the above commutators associated with intrinsic
square functions on the weighted Lebesgue spaces, weighted Morrey spaces,
weighted Orlicz--Morrey spaces, Musielak--Orlicz Morrey spaces and weighted
Hardy spaces has also been investigated by many authors (see \cite{HW,Wang2012,
GOS,LNYZ}). In particular, Han and Wu \cite{HW} characterized a class of
weighted ${\rm BMO}$-spaces via the boundedness of the above commutators.
More precisely, let $p\in(0,1]$,  and let $w\in A_\infty(\mathbb{R}^n)$  satisfy
$\int_{\mathbb{R}^n}\frac{w(x)}{(1+|x|)^{np}}dx<\infty$. A locally integrable
function $b$ is said to be in $\mathcal{BMO}_{w,p}(\mathbb{R}^n)$ if
$$\|b\|_{\mathcal{BMO}_{w,p}(\mathbb{R}^n)}:=\sup\limits_{B\subset\mathbb{R}^n}\Big(\frac{1}{w(B)}\int_{B^c}\frac{w(x)}{|x-x_B|^{np}}dx\Big)^{1/p}\int_{B}|b(y)-b_B|dy<\infty,$$
where the supremum is taken over all balls $B:=B(x_B,r_B)\subset\mathbb{R}^n$
with $x_B\in\mathbb{R}^n$, $r_B\in(0,\infty)$ and $B^c:=\mathbb{R}^n\setminus B$.
Here and hereafter,
$$w(B):=\int_{B}w(z)dz\ \ \ {\rm and}\ \ \ b_B:=\frac{1}{|B|}\int_B b(z)dz.$$
The main results of \cite{HW} can be formulated as follows.

\medskip

\quad\hspace{-20pt}{\bf Theorem A} {\rm (\cite{HW})} {\it Let $0<\alpha\leq1$,
$n/(n+\alpha)<p\leq1$ and $\lambda>3+2\alpha/n$. Suppose that
$w\in A_{p(1+\alpha/n)}(\mathbb{R}^n)$ and satisfies $\int_{\mathbb{R}^n}
\frac{w(x)}{(1+|x|)^{np}}dx<\infty$. Then for $b\in{\rm BMO}(\mathbb{R}^n)$,
the following statements are equivalent:
\begin{itemize}
\item[(i)] $b\in\mathcal{BMO}_{w,p}(\mathbb{R}^n)$;
\item[(ii)] $g_{\alpha,b}$ is bounded from $H^p_{w}(\mathbb{R}^n)$ to
$L^p_{w}(\mathbb{R}^n)$;
\item[(iii)]$S_{\alpha,b}$ is bounded from $H^p_{w}(\mathbb{R}^n)$ to
$L^p_{w}(\mathbb{R}^n)$;
\item[(iv)] $g_{\alpha,b}^{\lambda,*}$ is bounded from $H^p_{w}(\mathbb{R}^n)$ to
$L^p_{w}(\mathbb{R}^n)$.
\end{itemize}}

\medskip
It is worth noting that $\mathcal{BMO}_{w,p}(\mathbb{R}^{n})$ was
introduced by Liang et al. \cite{Huy,LKY} who pointed out the following
proper inclusion relationship $\mathcal{BMO}_{w,p}(\mathbb{R}^{n})
\subsetneq \rm{BMO}(\mathbb{R}^{n})$. It is well known that for a standard
Calder\'on--Zygmund operator $T$, the commutator $[b,T]$ is not bounded
from $H^{p}(\mathbb{R}^{n})$ to $L^{p}(\mathbb{R}^{n})$ unless $b$ is
constant (see \cite{HST}). In \cite{Huy,LKY}, the authors showed that $[b,T]$
is bounded from the weighted Hardy space $H_{w}^{p}(\mathbb{R}^{n})$
to the weighted Lebesgue space $L_{w}^{p}(\mathbb{R}^{n})$ under the
condition $b\in \mathcal{BMO}_{w,p}(\mathbb{R}^{n})$. The above authors
also proved that the commutators $\{[b,R_j]\}_{j=1}^{n}$ of the classical
Riesz transforms are bounded from $H_{w}^{p}(\mathbb{R}^{n})$ to
$L_{w}^{p}(\mathbb{R}^{n})$ if and only if $b\in\mathcal{BMO}_{w,
p}(\mathbb{R}^{n})$. On the other hand, as an important extension of the
above results, Huy and Ky \cite{HK} studied the mapping properties of
$[b,T]$ with $T$ being a standard Calder\'on--Zygmund operator on
Musielak--Orlicz Hardy spaces $H^{\varphi}(\mathbb{R}^{n})$. More
precisely, Huy and Ky \cite{HK} introduced a suitable subspace
$\mathcal{BMO}_{\varphi}(\mathbb{R}^{n})$ (see section \ref{S2} for the
definition) of $\rm{BMO}(\mathbb{R}^{n})$ and proved that $[b,T]$ is
bounded from $H^{\varphi}(\mathbb{R}^{n})$ to $L^{\varphi}(\mathbb{R}^{n})$
if $b\in\mathcal{BMO}_{\varphi}(\mathbb{R}^{n})$. Moreover, if
$b\in \rm{BMO}(\mathbb{R}^{n})$ and the Riesz-transform commutators
$\{[b,R_j]\}_{j=1}^{n}$ are bounded from $H^{\varphi}(\mathbb{R}^{n})$
to $L^{\varphi}(\mathbb{R}^{n})$, then
$b\in\mathcal{BMO}_{\varphi}(\mathbb{R}^{n})$.

In view of Theorem A and the Musielak--Orlicz commutator theory developed by Huy and Ky \cite{HK}, it is natural to ask whether the above BMO-type characterization for commutators of intrinsic square functions continues to hold in the more general Musielak--Orlicz setting. This leads to the following question.

\begin{question}\label{que1.1}
Can $\mathcal{BMO}_{\varphi}(\mathbb{R}^{n})$  be characterized
by the boundedness of commutators associated with intrinsic square
functions from $H^{\varphi}(\mathbb{R}^{n})$ to $L^{\varphi}(\mathbb{R}^{n})$?
\end{question}

An affirmative answer to this question would extend the weighted characterization in Theorem A to the Musielak--Orlicz framework and, at the same time, complement the results of Huy and Ky \cite{HK} by replacing Calder\'on--Zygmund operators with intrinsic square functions. This constitutes one of the main motivations of the present paper. The passage from weighted Hardy spaces to the Musielak--Orlicz setting is not merely formal. It requires estimates adapted to the growth function $\varphi$ and the corresponding atomic structure of $H^\varphi(\mathbb{R}^n)$. Moreover, establishing a full characterization also requires the converse implication, namely, recovering the ${\rm BMO}_\varphi(\mathbb{R}^n)$ condition on the symbol from the boundedness of the commutators.

Before stating our first main result, let us introduce some notation. Let $\varphi=\varphi(x,t)$ be a growth function on $\mathbb R^n\times[0,\infty)$. We denote by $i(\varphi)$ its critical uniformly lower type index and by $\mathbb A_q$ the uniformly Muckenhoupt classes; see Section \ref{S2} for their precise definitions.

\begin{theorem}\label{thm1.1}
Let $0<\alpha\leq1$, $\lambda>2+2\alpha/n$ and let $\varphi$ be a
growth function satisfying $\varphi\in\mathbb{A}_{i(\varphi)(1+\alpha/n)}$,
$\frac{n}{n+\alpha}<i(\varphi)\leq1$ and $\int_{\mathbb{R}^n}\varphi(x,
(1+|x|)^{-n})dx<\infty$. Then, for $b\in\mathrm{BMO}(\mathbb{R}^n)$, the
following statements are equivalent:
\begin{itemize}
\item[(i)] $b\in\mathcal{BMO}_{\varphi}(\mathbb{R}^n)$;
\item[(ii)] $g_{\alpha,b}$ is bounded from $H^\varphi(\mathbb{R}^n)$
to $L^\varphi(\mathbb{R}^n)$;
\item[(iii)] $S_{\alpha,b}$ is bounded from $H^\varphi(\mathbb{R}^n)$ to
$L^\varphi(\mathbb{R}^n)$;
\item[(iv)] $g_{\alpha,b}^{\lambda,*}$ is bounded from $H^\varphi(\mathbb{R}^n)$
to $L^\varphi(\mathbb{R}^n)$.
\end{itemize}
\end{theorem}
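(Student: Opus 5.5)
We prove (i)$\Rightarrow$(ii), (iii), (iv) and then (ii), (iii) or (iv) $\Rightarrow$(i); since the three converses are proved the same way, it suffices to treat one of them in detail.

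\emph{Sufficiency, via atoms.} Recall that $\mathcal{BMO}_\varphi$ is the Huy--Ky space: $b\in\mathcal{BMO}_\varphi$ means
$$\sup_B \frac{1}{\|\chi_B\|_{L^\varphi}}\,\Big\|\frac{1}{(r_B+|\cdot-x_B|)^{n}}\Big\|_{L^\varphi}\int_B|b-b_B|<\infty.$$
We use the atomic decomposition of $H^\varphi$ into $(\varphi,\infty,s)$-atoms $a$, supported in balls $B$ with $\|a\|_\infty\le\|\chi_B\|_{L^\varphi}^{-1}$. Because $i(\varphi)$ is attained as a uniform lower type and $L^\varphi$ has the usual quasi-triangle property, it is enough to prove the uniform bound
$$\int\varphi\big(x,\,T_b(a)(x)\big)\,dx\lesssim 1$$
for $T_b\in\{g_{\alpha,b},\,S_{\alpha,b},\,g^{\lambda,*}_{\alpha,b}\}$. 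We write
$$T_b(a)(x)\le |b(x)-b_B|\,T(a)(x)+T\big((b-b_B)a\big)(x).$$

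On $2B$ we use the $L^q(\varphi(\cdot,t))$-boundedness of $T$ and of $[b,T]$ for $\varphi\in\mathbb A_q$, with $b\in{\rm BMO}$, uniformly in $t$; these are known weighted results from Wilson and Wang. Combined with Hölder's inequality this gives the local piece $\lesssim1$.

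On $(2B)^c$ we use the cancellation of $a$ and the Hölder-$\alpha$ regularity of $\phi\in\mathcal C_\alpha$ to get
$$T(a)(x)\lesssim \|\chi_B\|_{L^\varphi}^{-1}\,\frac{r_B^{n+\alpha}}{|x-x_B|^{n+\alpha}}$$
for $g_\alpha$ and $S_\alpha$. For $g^{*}_{\lambda}$ we get the same decay when $\lambda>2+2\alpha/n$, after splitting the $(y,t)$-region.

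The term with $|b(x)-b_B|$ is handled through the $\mathbb A_{i(\varphi)(1+\alpha/n)}$ condition and the John--Nirenberg inequality; this is exactly where the restriction on $i(\varphi)$ is used.

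The term $T((b-b_B)a)$ has no cancellation. There we only get
$$T\big((b-b_B)a\big)(x)\lesssim \|\chi_B\|_{L^\varphi}^{-1}\,\frac{\int_B|b-b_B|}{|x-x_B|^{n}},$$
and its $L^\varphi$-size is controlled precisely by the $\mathcal{BMO}_\varphi$ norm, as in Huy--Ky. The integrability hypothesis $\int\varphi(x,(1+|x|)^{-n})\,dx<\infty$ guarantees that this quantity is finite. Hence (i) implies (ii)--(iv).

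\emph{Necessity, via a lower bound.} Assume, say, that $g_{\alpha,b}$ is bounded. Fix a ball $B=B(x_B,r)$ and use the test function
$$f=\frac{\big(\chi_{B}-\chi_{B'}\big)\,{\rm sgn}(b-b_{B})}{\|\chi_B\|_{L^\varphi}},$$
with a translate $B'$ at distance $\sim r$, modified so that it has mean zero. This $f$ is a multiple of an atom, so $\|f\|_{H^\varphi}\lesssim1$. A cleaner alternative is to take $a=\|\chi_B\|_{L^\varphi}^{-1}(\chi_{B_1}-\chi_{B_2})$ for two halves $B_1,B_2$ of $B$.

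Then, for $|x-x_B|\sim t\ge Cr$, we choose a specific $\phi\in\mathcal C_\alpha$ that is positive on $B(0,1/2)$, so that $\phi_t(x-y)\approx t^{-n}$ on $B$. Using this in the expression for $g_{\alpha,b}$ (with $t$ restricted to $[|x-x_B|,2|x-x_B|]$), we obtain
$$g_{\alpha,b}(f)(x)\ge \Big|\int [b(x)-b(y)]\,\phi_t(x-y)\,f(y)\,dy\Big|.$$
The term containing $b(x)$ vanishes or becomes small, because $\int f=0$ and $\phi_t$ is nearly constant on $B$. The remaining term is $\gtrsim |x-x_B|^{-n}\|\chi_B\|_{L^\varphi}^{-1}\int_B|b-b_B|$, using a perturbation with the Lipschitz error of size $r^\alpha t^{-n-\alpha}$. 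That error is absorbed by means of $b\in{\rm BMO}$ and the extra decay.

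Taking the $L^\varphi$ quasi-norm over $(CB)^c$ and adding the trivial bound on $CB$ gives
$$\|\chi_B\|_{L^\varphi}^{-1}\int_B|b-b_B|\,\Big\|\frac{1}{(r+|\cdot-x_B|)^{n}}\Big\|_{L^\varphi}\lesssim 1.$$
This is $b\in\mathcal{BMO}_\varphi$, i.e. (i). For $S_{\alpha,b}$ and $g^{*}_{\lambda}$ we argue the same way: for $(y,t)\in\Gamma(x)$ with $t\sim|x-x_B|$ the cone has measure comparable to $t^{n+1}$, so the same lower bound holds.

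\emph{Main obstacle.} The hardest step is the lower bound in the necessity part. One must extract $\int_B|b-b_B|$ from a supremum over $\phi$ of an expression containing $b(x)-b(y)$, with $x$ far from $B$. This requires designing the test atom so that the $b(x)$ contribution is killed by the mean-zero property, while the error terms, which are only $\lesssim \|b\|_{\rm BMO}\,(r/|x-x_B|)^\alpha$ relative to the main term, can be absorbed. The absorption uses the large-$|x-x_B|$ region: we take $C$ large depending on $\|b\|_{\rm BMO}$, then use the doubling/$\mathbb A_q$ property of $\varphi$ to compare the $L^\varphi$ norms over $(CB)^c$ and over $(2B)^c$.
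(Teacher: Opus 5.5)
\textbf{Sufficiency.} Your sufficiency half is correct and slightly more direct than the paper's. The paper combines Lemma~\ref{lem2.5} with the $H^\varphi\to L^\varphi$ boundedness of $g_\alpha$. You instead bound $T((b-b_B)a)(x)\lesssim\|\chi_B\|_{L^\varphi(\mathbb{R}^n)}^{-1}|x-x_B|^{-n}\int_B|b-b_B|$ away from a dilate of $B$, which does hold for all three operators under the standing assumption on $\lambda$. You then read off the $\mathcal{BMO}_\varphi$ norm directly. The reduction to atoms should go through Lemma~\ref{lem2.2}; you do not need $i(\varphi)$ to be attained, and that is not among the hypotheses.

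\textbf{Necessity: the gap.} The genuine gap is in the necessity half, exactly at the step you call the main obstacle, and it has two parts.

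First, your test functions do not detect $\int_B|b-b_B|$. For $\|\chi_B\|_{L^\varphi}^{-1}(\chi_{B_1}-\chi_{B_2})$ the main term is essentially $\int_{B_1}(b-b_B)-\int_{B_2}(b-b_B)$, which can vanish. For $(\chi_B-\chi_{B'})\,{\rm sign}(b-b_B)$ it is essentially $\int_B|b-b_B|-\int_{B'}|b-b_B|$, which can be $\le0$. What works is the paper's atom $\widetilde a=\frac{1}{2\|\chi_B\|_{L^\varphi}}(f-f_B)\chi_B$ with $f={\rm sign}(b-b_B)$. Because $\int_B(b-b_B)=0$, the mean correction is invisible and $\int(b-b_B)\widetilde a=\frac{1}{2\|\chi_B\|_{L^\varphi}}\int_B|b-b_B|$.

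Second, the absorption of the $b(x)$-term fails. If $\phi_t(x-\cdot)$ is only nearly constant on $B$, the available bound for that term is $|b(x)-b_B|\,(r/t)^\alpha\,t^{-n}|B|\,\|\chi_B\|_{L^\varphi}^{-1}$. Its ratio to the main term is therefore $(r/t)^\alpha|b(x)-b_B|\big/\big(|B|^{-1}\int_B|b-b_B|\big)$, not $\lesssim\|b\|_{\rm BMO}(r/t)^\alpha$. Indeed your bound is not invariant under $b\mapsto cb$, while the ratio is. The function $b(x)$ is not pointwise bounded, and $|B|^{-1}\int_B|b-b_B|$ can be arbitrarily small compared with $\|b\|_{\rm BMO}$. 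That is precisely the regime in which $\mathcal{BMO}_\varphi$ is strictly smaller than BMO. So no $C$ depending only on $\|b\|_{\rm BMO}$ absorbs the term. A ball-dependent $C$ ruins uniformity, since the constant comparing the $L^\varphi((CB)^c)$ and $L^\varphi((2B)^c)$ norms of $|\cdot-x_B|^{-n}$ grows with $C$.

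\textbf{Repair.} Both defects are repairable, and the repaired argument is shorter than the paper's. Take $\phi=c_0\psi\in\mathcal C_\alpha$ with $\psi$ smooth, supported in $B(0,1)$, of mean zero, and $\psi\equiv1$ on $B(0,1/2)$. For $x\notin B$ and $4|x-x_B|<t\le8|x-x_B|$ one has $|x-y|<t/2$ for every $y\in B$; your window $t\in[|x-x_B|,2|x-x_B|]$ does not give this. Hence $\phi_t(x-\cdot)\equiv c_0t^{-n}$ on $B$, and $\int\widetilde a=0$ gives exactly
\[
\int_{\mathbb{R}^n}[b(x)-b(y)]\,\phi_t(x-y)\,\widetilde a(y)\,dy=-\frac{c_0}{2t^{n}\|\chi_B\|_{L^\varphi(\mathbb{R}^n)}}\int_B|b(y)-b_B|\,dy .
\]
The $b(x)$-term and the Lipschitz error vanish identically. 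Integrating in $t$ yields $g_{\alpha,b}(\widetilde a)(x)\gtrsim|x-x_B|^{-n}\|\chi_B\|_{L^\varphi}^{-1}\int_B|b-b_B|$ on $B^c$, hence $\|b\|_{\mathcal{BMO}_\varphi}\lesssim\|g_{\alpha,b}\|_{H^\varphi\to L^\varphi}$. For $S_{\alpha,b}$ and $g^{\lambda,*}_{\alpha,b}$, restrict to $|y-x|<|x-x_B|$ and $8|x-x_B|<t\le16|x-x_B|$.

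\textbf{The paper's route.} The paper instead removes the $b(x)$-term at the level of $L^\varphi$ norms via Lemma~\ref{lem2.7}, paying an additive $\|b\|_{\rm BMO}$; this is where the hypothesis $b\in{\rm BMO}$ enters. It then dominates the classical $g$-function by $g_\alpha$ and uses the Littlewood--Paley characterization of $H^\varphi$ to bound $\|(b-b_B)\widetilde a\|_{H^\varphi}$. Finally it extracts $\int_B|b-b_B|$ through Lemma~\ref{lem2.6}. Either mechanism closes the argument; pointwise absorption with a large $C$ does not.
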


\begin{remark}\label{rem1.1}
Theorem \ref{thm1.1} extends Theorem A to the more general Musielak--Orlicz
Hardy setting. It should be pointed out that the index range
$\lambda>3+2\alpha/n$ in Theorem A is extended to the case $\lambda>2+2\alpha/n$.
\end{remark}

\subsection{Endpoint estimates for commutators of intrinsic square functions}
The endpoint boundedness of commutators of various operators on Hardy spaces
has also been an important topic in harmonic analysis.
P\'erez \cite{Per} first showed that, for a standard Calder\'on--Zygmund operator $T$, the commutator $[b,T]$ is not bounded from $H^1(\mathbb R^n)$ to $L^1(\mathbb R^n)$ unless $b$ is constant (see also \cite{HST}).
  However, P\'erez proposed a
complementary approach to the endpoint problem for various commutators on Hardy
spaces, which is to retain the symbol $b$ in a {\rm BMO}-type class and instead
restrict the domain to a suitable $b$-adapted Hardy space. More precisely, P\'erez
\cite{Per} observed that the above boundedness can be valid if $H^1(\mathbb{R}^n)$
is replaced by an appropriate subspace of $H^1(\mathbb{R}^n)$. The development of
this approach is closely connected with the product structure of Hardy and
${\rm BMO}$ functions. Bonami et al. \cite{BIJZ} showed that the product of an
$H^{1}(\mathbb{R}^{n})$ function and a $\rm{BMO}(\mathbb{R}^{n})$ function can
be decomposed into an integrable part and a distribution belonging to a suitable
Hardy--Orlicz space. Subsequently, Bonami et al. \cite{BGK} obtained a bilinear
version of this decomposition. Based on this bilinear structure, Ky \cite{Ky2013}
found the largest subspace $H_b^{1}(\mathbb{R}^{n})$ of $H^{1}(\mathbb{R}^{n})$
and established the boundedness of commutators of Calder\'on--Zygmund operators
from $H_b^{1}(\mathbb{R}^{n})$ to $L^{1}(\mathbb{R}^{n})$. Recently, Fang and
Liu \cite{FL} extended this bilinear decomposition to the Orlicz--Hardy setting
and introduced a subspace $H_b^{\Phi}(\mathbb{R}^{n})$ of $H^{\Phi}$, see Section \ref{S3} for the definitions of
$H_b^{\Phi}(\mathbb{R}^{n})$ and $H^{\Phi}$. Moreover, the above authors also
established the boundedness of commutators of Calder\'on--Zygmund operators from
$H_b^{\Phi}(\mathbb{R}^{n})$ to $L^{1}(\mathbb{R}^{n})$ and from $H^\Phi(\mathbb R^n)$ to $L^{1,\infty}(\mathbb R^n)$. Motivated by these
results, it is natural to ask the following question:

\begin{question}\label{que1.2}
Let $b\in \mathrm{BMO}_{\Phi}(\mathbb R^n)$. Are the commutators associated with the intrinsic Littlewood--Paley $g$-function, the intrinsic $g_\lambda^*$-function, and the intrinsic Lusin area function bounded from $H_b^\Phi(\mathbb R^n)$ to $L^1(\mathbb R^n)$ and from $H^\Phi(\mathbb R^n)$ to $L^{1,\infty}(\mathbb R^n)$?
\end{question}

The preceeding question motivates the second part of the present paper. Before stating our second main result, we introduce some notation. Let $\Phi=\Phi(t)$ be a growth function independent of the spatial variable, and denote by $i(\Phi)$ and $I(\Phi)$ its critical lower and upper type indices, respectively. Let $H^\Phi(\mathbb R^n)$ be the Orlicz--Hardy space associated with $\Phi$ and $\mathrm{BMO}_\Phi(\mathbb R^n)$ the corresponding BMO-type space.

\begin{theorem}\label{thm1.2}
Let $0<\alpha\leq1$, $2+2\alpha/n<\lambda<\infty$ and let $\Phi$ be a growth
function satisfying $\frac{n}{n+\alpha}<i(\Phi)\leq I(\Phi)<1$ and
$$\begin{array}{ll}
\begin{cases}
\dfrac{1}{I(\Phi)}
>
\dfrac{1}{i(\Phi)}-1,
& n=1,
\\[2mm]
\dfrac{1}{I(\Phi)}
>
\dfrac{
\Big[n(i(\Phi)^{-1}-1)\Big]+n-1}{n},
& n\geq2.
\end{cases}
\end{array}\eqno(1.1)$$
Assume that $b\in\mathrm{BMO}_{\Phi}(\mathbb{R}^n)$. Then the
following conclusions hold:
\begin{itemize}
\item[(i)] $g_{\alpha,b}$ is bounded from $H_b^\Phi(\mathbb{R}^n)$
to $L^1(\mathbb{R}^n)$;
\item[(ii)] $g_{\alpha,b}$ is bounded from $H^\Phi(\mathbb{R}^n)$ to
$L^{1,\infty}(\mathbb{R}^n)$;
\item[(iii)] $S_{\alpha,b}$ is bounded from $H_b^\Phi(\mathbb{R}^n)$
to $L^1(\mathbb{R}^n)$;
\item[(iv)]$S_{\alpha,b}$ is bounded from $H^\Phi(\mathbb{R}^n)$ to
$L^{1,\infty}(\mathbb{R}^n)$;
\item[(v)] $g_{\alpha,b}^{\lambda,*}$ is bounded from
$H_b^\Phi(\mathbb{R}^n)$ to $L^1(\mathbb{R}^n)$;
\item[(vi)] $g_{\alpha,b}^{\lambda,*}$ is bounded from
$H^\Phi(\mathbb{R}^n)$ to $L^{1,\infty}(\mathbb{R}^n)$.
\end{itemize}
\end{theorem}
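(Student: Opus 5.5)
We follow the scheme of Ky \cite{Ky2013} and Fang--Liu \cite{FL}, built on the bilinear decomposition of $H^\Phi\times\mathrm{BMO}_\Phi$. Since $g_{\alpha}(f)\le C S_\alpha(f)$ and $S_\alpha(f)\le C_\lambda g^*_{\lambda,\alpha}(f)$ pointwise, and the same comparisons hold for the commutators, the plan is to prove everything for $g_{\alpha,b}^{\lambda,*}$. Whenever the argument needs $L^2$-boundedness or kernel size, we also check it directly for $S_{\alpha,b}$ and $g_{\alpha,b}$. Since the pointwise domination runs upward, the cleanest route is a uniform treatment of all three operators.

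\textbf{Step 1 (atomic reduction for (i),(iii),(v)).} By the definition of $H_b^\Phi$ in Section~\ref{S3} and the finite-atomic (or molecular) decomposition from \cite{FL}, it suffices to show
$$\|\scriptt_b(a)\|_{L^1}\le C\|b\|_{\mathrm{BMO}_\Phi}$$
uniformly over $(\Phi,\infty)$-atoms, or $L^2$-atoms $a$ supported in a ball $B=B(x_0,r)$, where $\scriptt_b$ is any of the three commutators. Sublinearity of $\scriptt_b$ together with the quasi-norm structure then passes to general $f$ via a density argument; we use that $\scriptt$ is sublinear and bounded on $L^2$.

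Fix such an atom. We write
$$\scriptt_b(a)(x)\le |b(x)-b_B|\,\scriptt(a)(x)+\scriptt\big((b-b_B)a\big)(x).$$
We then estimate each term on $2B$ and on $(2B)^c$.
\begin{itemize}
\item On $2B$, Hölder's inequality, the $L^2$-boundedness of $\scriptt=g^*_{\lambda,\alpha}$ (valid for $\lambda>2$), and John--Nirenberg give a bound $C\|b\|_{\mathrm{BMO}}|B|\,\|a\|_{L^2}^2\lesssim\ldots$. Because of the $\Phi$-normalization of atoms, this yields $\|b\|_{\mathrm{BMO}}\,|B|/\Phi^{-1}(|B|^{-1})^{-1}\cdots$.
\item On $(2B)^c$, the cancellation of $a$ and the Hölder-$\alpha$ regularity of $\phi\in\scriptc_\alpha$ give
$$\scriptt(a)(x)\lesssim r^{n+\alpha}|x-x_0|^{-n-\alpha}\|a\|_{L^1}/|B|.$$
For $g^*_\lambda$ this requires splitting the $(y,t)$-integral into $|y-x_0|<|x-x_0|/2$ and its complement. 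This is exactly where $\lambda>2+2\alpha/n$ enters, giving decay $|x-x_0|^{-(n+\alpha)}$.
\end{itemize}
The piece $|b-b_B|\,\scriptt(a)$ on $(2B)^c$ is summed over dyadic annuli, with $|b_{2^kB}-b_B|\lesssim k\|b\|_{\mathrm{BMO}}$. The result is not integrable against the atomic normalization $\|a\|_{L^1}\le |B|\,\Phi^{-1}(|B|^{-1})^{-1}\cdots$ unless we use the finer $\mathrm{BMO}_\Phi$ condition, i.e.\ the decay factor $\frac{1}{|B|}\int|b-b_B|\cdot(\text{weight})$ in the definition of $\mathrm{BMO}_\Phi$. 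The non-integrable part is precisely
$$(b(x)-b_B)\cdot\Big(\int a\,\phi_t\Big),$$
and here the $b$-adapted condition defining $H^\Phi_b$ (the vanishing of the bilinear part $\Pi(a,b)$, or $a\,(b-b_B)$ having controlled $H^\Phi$-type size) is used. Concretely, we expect to rewrite
$$\scriptt_b(a)\le \scriptt\big(a(b-b_B)\big)+\text{(good terms)},$$
where $a(b-b_B)$ lies in $H^1$ with norm $\lesssim\|b\|_{\mathrm{BMO}_\Phi}$ by the characterization of $H^\Phi_b$ in \cite{FL}. Then $\scriptt\colon H^1\to L^1$, which follows from the molecular estimate above, finishes the argument.

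\textbf{Step 2 (weak type, (ii),(iv),(vi)).} For $f\in H^\Phi$, we use the bilinear decomposition of \cite{FL}:
$$b\times f=\mathfrak S(f,b)+\mathfrak T(f,b),\qquad \mathfrak S\in L^1,\ \mathfrak T\in H^{\Psi}$$
for a suitable Orlicz--Hardy space $H^\Psi$ (in the regime of weak-$L^1$ targets). Writing $\scriptt_b(f)\le|b|\,\scriptt f+\scriptt(bf)$ is not directly usable. Instead we use the linearization $[b,\,f*\phi_t](x)$ together with a vector-valued (Hilbert-space valued, in $L^2(\Gamma,dy\,dt/t^{n+1})$) linear operator $\mathcal L$ with supremum over $\phi$ frozen measurably, so that
$$\scriptt_b f=\|\,b\,\mathcal L f-\mathcal L(bf)\,\|_{\mathcal H}.$$
Then $\mathcal L(\mathfrak T)$ is controlled in $L^{1,\infty}$ via boundedness $H^\Psi\to L^{1,\infty}$, which requires the molecular estimates of Step~1 and condition (1.1), ensuring enough moments and decay. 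The piece $\mathcal L(\mathfrak S)$ is controlled by the weak $(1,1)$ bound of the Calderón--Zygmund-type vector-valued operator. Finally, $b\,\mathcal Lf-\mathfrak S$ is controlled by the analogue of the $\Pi$-terms, summing atom-wise as in Step~1.

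\textbf{Main obstacle.} The main obstacle is the sharp off-support estimate for $g^*_{\lambda,\alpha}$ with only $\lambda>2+2\alpha/n$, together with showing that the vector-valued linear operator $\mathcal L$ maps $H^\Psi$ into $L^{1,\infty}$ under (1.1). We would first try to verify that $\mathcal L$ is a Hilbert-valued Calderón--Zygmund operator with $\alpha$-Hölder kernel. If that succeeds, the theory of \cite{FL} for Calderón--Zygmund commutators transfers verbatim.
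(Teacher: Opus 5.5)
\paragraph{Step 1 contains a genuine gap.} Your reduction needs a uniform bound $\|\mathcal{T}_b(a)\|_{L^1}\lesssim\|b\|_{\mathrm{BMO}_\Phi}$ over ordinary $(\Phi,q,0)$-atoms. That bound is false for every nonconstant $b$.

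Split $\mathcal{T}_b(a)\le|b-b_B|\,\mathcal{T}(a)+\mathcal{T}((b-b_B)a)$. The function $(b-b_B)a$ is supported in $B$, but its integral is $\int ab$, and this is nonzero for suitable atoms. Choose $\phi\in\mathcal{C}_\alpha$ with $|\phi((x-x_0)/t)|\gtrsim1$ for $t\sim|x-x_0|$. Then $\mathcal{T}((b-b_B)a)(x)\gtrsim|\int ab|\,|x-x_0|^{-n}$ far from $B$. By the reverse triangle inequality, $\mathcal{T}_b(a)\ge\mathcal{T}((b-b_B)a)-|b-b_B|\,\mathcal{T}(a)$, and the subtracted term is integrable, so $\mathcal{T}_b(a)\notin L^1$.

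For the same reason, $a(b-b_B)$ does not have mean zero and is not in $H^1$. So it cannot be put there ``by the characterization of $H^\Phi_b$.'' In fact $[b,\mathcal M]a$ has the same $|x|^{-n}$ tail, so a single atom is generally not even in $H^\Phi_b$. Membership in $H^\Phi_b$ is a global property of $f$ and is not inherited by its atoms. That is exactly why only weak type can hold on all of $H^\Phi$.

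You have also swapped the roles of the two terms. The far-field piece $|b-b_B|\,\mathcal{T}(a)$ \emph{is} integrable once the $\mathrm{BMO}_\Phi$ condition is combined with the decay $2^{-j(n+\alpha)}$ and a lower type $\mathfrak p>n/(n+\alpha)$; this is Lemma~\ref{lem3.2}. The real obstruction is $\mathcal{T}((b-b_B)a)$.

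\paragraph{The missing idea and the paper's route.} You need to keep the mean-carrying part of $(b-b_B)a$ global. This is what Lemma~\ref{lem3.1} packages. For every sublinear $\mathcal{T}$ in the class $\mathcal K_\Phi$ of Definition~\ref{def3.5},
\[
|\mathcal{T}_bf|\le|\mathcal{T}(\Pi_4(f,b))|+\mathcal R(f,b),\qquad \|\mathcal R(f,b)\|_{L^1}\lesssim\|f\|_{H^\Phi}\|b\|_{\mathrm{BMO}_\Phi}.
\]
Heuristically, $\Pi_4$ carries $\int ab$, and the rest of $(b-b_B)a$ lies in $H^1$ uniformly.
\begin{itemize}
\item Since $\Pi_4(f,b)\in L^1$ for every $f\in H^\Phi$, the weak $(1,1)$ bound of $\mathcal{T}$ gives (ii), (iv) and (vi).
\item Since $\Pi_4(f,b)\in H^1$ exactly when $f\in H^\Phi_b$, with $\|\Pi_4(f,b)\|_{H^1}\lesssim\|f\|_{H^\Phi_b}$, the $H^1\to L^1$ bound gives (i), (iii) and (v).
\end{itemize}
So all that must be checked about the operators is membership in $\mathcal K_\Phi$: boundedness $H^1\to L^1$, weak $(1,1)$, and the atomic estimate for $(b-b_B)\mathcal{T}a$. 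Your off-support decay estimate and your local H\"older argument, with $\mathrm{BMO}_\Phi$ in place of $\mathrm{BMO}$, are exactly the right inputs for that last condition.

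\paragraph{Step 2.} Step 2 lacks the same bookkeeping. The term $b\,\mathcal Lf$ is never controlled; the expression ``$b\,\mathcal Lf-\mathfrak S$'' subtracts a scalar function from a vector-valued one. A Ky-type Calder\'on--Zygmund argument would also need $\mathcal Lf$ to lie in a Hardy space to which a product decomposition applies, and that is not available here. The workable splitting is
\[
\mathcal{T}_bf\le\sum_j|\lambda_j|\,|b-b_{B_j}|\,\mathcal{T}a_j+\mathcal{T}\Big(\sum_j\lambda_j(b-b_{B_j})a_j\Big),
\]
after which the inner sum is decomposed into $\Pi_4(f,b)$ plus an $H^1$ function. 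Since Lemma~\ref{lem3.1} applies directly to sublinear operators, no Hilbert-valued linearization is needed.
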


\begin{remark}\label{1.2}
It should be emphasized that Theorem \ref{thm1.2} and the endpoint
framework developed in Section \ref{S3} can be applied to commutators of
other classical operators, for instance, the Marcinkiewicz integral,
the Lusin area integral, Littlewood--Paley square functions and so on.
\end{remark}

The remainder of this paper is organized as follows. In Section \ref{S2}
we recall some definitions of Musielak--Orlicz Hardy spaces and the
corresponding ${\rm BMO}$ spaces, and later provide a precise proof of
Theorem \ref{thm1.1}. In Section \ref{S3} we present the proof of Theorem
\ref{thm1.2} after recalling the definitions of Orlicz--Hardy spaces and
establishing some preliminary lemmas.

In what follows, the notation $X\lesssim Y$ means $X\le C Y$ for some
constant $C>0$ which is independent of the essential variables depending
on $X$ and $Y$; and $X\simeq Y$ means $X\lesssim Y\lesssim X$. For each
$E\subset\mathbb{R}^n$ we denote $E^c=\mathbb{R}^n\setminus E$. For
$a\in\mathbb{R}$ we denote $[a]=\max\{N\in\mathbb{Z}:N\leq a\}$. Denote
$\mathbb{N}=\{0,1,2,\ldots\}$. For $x=(x_1,\ldots,x_n)\in\mathbb{R}^n$
and $\beta=(\beta_1,\ldots,\beta_n)\in\mathbb{Z}_+^n$, we denote
$x^\beta:=x_1^{\beta_1}\cdots x_n^{\beta_n}$ and $|\beta|:=\beta_1+\cdots
+\beta_n$.

\bigskip

\section{BMO-Type characterizations: Proof of Theorem \ref{thm1.1}}\label{S2}

This section aims to present the proof of Theorem \ref{thm1.1}. We first
recall the basic notions and auxiliary results concerning Musielak--Orlicz
Hardy spaces and the corresponding ${\rm BMO}$ space $\mathcal{BMO}_{\varphi}
(\mathbb{R}^n)$. Secondly, we establish some preliminary lemmas, which are
the main ingredients in the proof of Theorem \ref{thm1.1}. Finally, the proof of
Theorem \ref{thm1.1} will be provided.

\subsection{Musielak--Orlicz Hardy spaces}

Before presenting the definitions of Musielak--Orlicz Hardy spaces, let
us recall some basic notions.

\begin{definition}\label{def2.1}
\begin{itemize}
\item[(i)] {\rm (Musielak--Orlicz function)} A function $\varphi:\mathbb{R}^n
\times[0,\infty)\rightarrow[0,\infty)$ is called a Musielak--Orlicz
function if, for every $x\in\mathbb{R}^n$, $\varphi(x,\cdot)$ is an Orlicz
function on $[0,\infty)$ and, for every $t\in[0,\infty)$, $\varphi(\cdot,t)$
is measurable on $\mathbb{R}^n$.

\item[(ii)] {\rm (Uniformly lower and upper type $p$ functions)} Let $\varphi:
\mathbb{R}^n\times[0,\infty)\rightarrow[0,\infty)$ be a Musielak--Orlicz
function. We say that $\varphi$ is of uniformly lower type $p$ if there
exists a constant $C>0$ such that $\varphi(x,st)\leq Cs^p\varphi(x,t)$
for all $x\in\mathbb{R}^n$, $t\geq0$ and $s\in(0,1]$.
Analogously, $\varphi$ is said to be of uniformly upper type $p$ if the
preceding inequality holds for all $s\in[1,\infty)$.
\item[(iii)] {\rm (Critical uniformly lower and upper type indices)} Let $\varphi:
\mathbb{R}^n\times[0,\infty)\rightarrow[0,\infty)$ be a Musielak--Orlicz
function. The critical uniformly lower and upper type indices are defined,
respectively, by
$$i(\varphi):=\sup\Big\{p\in\mathbb{R}:\varphi\ \text{is of uniformly lower type }p\Big\}$$
and
$$I(\varphi):=\inf\Big\{p\in\mathbb{R}:\varphi\ \text{is of uniformly upper type }p\Big\}.$$

\item[(iv)] {\rm (Uniformly Muckenhoupt condition, \cite{K2014})} Let $q\in[1,\infty)$.
A function $\varphi:\mathbb{R}^n\times[0,\infty)\rightarrow[0,\infty)$ is said to
satisfy the uniformly Muckenhoupt condition $\mathbb{A}_q$, denoted by
$\varphi\in\mathbb{A}_q$, if the following conditions hold uniformly with respect
to $t\in(0,\infty)$, where
$$[\varphi]_{\mathbb{A}_q}:=\sup\limits_{t\in(0,\infty)}\sup\limits_{B\subset\mathbb{R}^n}\Big\{
\frac{1}{|B|}\int_B\varphi(x,t)\,dx\Big(\frac{1}{|B|}\int_B\varphi(x,t)^{-1/(q-1)}\,dx\Big)^{q-1}\Big\}<\infty,\ \ q\in(1,\infty),$$
$$[\varphi]_{\mathbb{A}_1}:=\sup\limits_{t\in(0,\infty)}\sup\limits_{B\subset\mathbb{R}^n}\Big\{\frac{1}{|B|}\int_B\varphi(x,t)dx\operatorname*{ess\,sup}_{x\in B}\varphi(x,t)^{-1}\Big\}<\infty.$$
Denote $\mathbb{A}_{\infty}:=\bigcup_{q\in[1,\infty)}\mathbb{A}_q$ and, for
$\varphi\in\mathbb{A}_{\infty}$, $q(\varphi):=\inf\{q\in[1,\infty):
\varphi\in\mathbb{A}_q\}$.
\item[(v)] {\rm (Growth function)} A function $\varphi:\mathbb{R}^n\times[0,\infty)
\rightarrow[0,\infty)$ is called a growth function if the following conditions
are satisfied: $\varphi$ is a Musielak--Orlicz function;
$\varphi \in \mathbb{A}_{\infty}$; $\varphi$ is of uniformly lower type $p$
for some $p \in(0,1]$ and of uniformly upper type 1.
\end{itemize}
\end{definition}

The following is an important property for an $\mathbb{A}_q$ function.

\begin{lemma}{\rm(\cite{K2014})}\label{lem2.1}
Let $\varphi\in\mathbb{A}_q$ with $q \in[1,\infty)$. Then for any ball
$B\subset\mathbb{R}^n$, $\lambda\in(1,\infty)$ and $t\in(0,\infty)$,
$$\varphi(\lambda B,t)\lesssim\lambda^{n q}\varphi(B,t),$$
where $\varphi(B,t):=\int_B \varphi(x,t)dx$.
\end{lemma}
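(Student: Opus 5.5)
The plan is to reduce the lemma to the classical doubling property of Muckenhoupt weights, applied with $t$ frozen. Fix $t\in(0,\infty)$ and write $w(x):=\varphi(x,t)$. The definition of $[\varphi]_{\mathbb{A}_q}$ in Definition \ref{def2.1}(iv) takes the supremum over $t$, so $w\in A_q(\mathbb{R}^n)$ with $[w]_{A_q}\le[\varphi]_{\mathbb{A}_q}$. The implicit constant will depend only on $[w]_{A_q}$ and $q$, so the resulting bound is uniform in $t$ and in the ball $B$.

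The key step is the standard estimate $\big(|E|/|Q|\big)^q\le [w]_{A_q}\,w(E)/w(Q)$ for a ball $Q$ and measurable $E\subset Q$. For $q>1$, I would write $|E|=\int_E w^{1/q}w^{-1/q}$ and apply H\"older with exponents $q$ and $q'$:
$$|E|\le w(E)^{1/q}\Big(\int_Q w^{-1/(q-1)}\Big)^{(q-1)/q}.$$
Raising this to the power $q$ and using the $A_q$ condition on $Q$, namely $\big(\int_Q w^{-1/(q-1)}\big)^{q-1}\le [w]_{A_q}|Q|^{q}/w(Q)$, gives the claim. For $q=1$, the $A_1$ condition gives $w(Q)/|Q|\le [w]_{A_1}\operatorname*{ess\,inf}_Q w$, so
$$w(E)\ge |E|\operatorname*{ess\,inf}_Q w\ge |E|\,w(Q)/([w]_{A_1}|Q|).$$

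To conclude, apply this with $Q=\lambda B$ and $E=B$. Since $|B|/|\lambda B|=\lambda^{-n}$, it gives $\lambda^{-nq}\le [w]_{A_q}\,w(B)/w(\lambda B)$, that is,
$$\varphi(\lambda B,t)\le[\varphi]_{\mathbb{A}_q}\lambda^{nq}\varphi(B,t).$$
There is no genuine obstacle here. The only points needing care are that the constant is uniform in $t$, which holds by the definition of $\mathbb{A}_q$, and the separate treatment of the endpoint $q=1$.
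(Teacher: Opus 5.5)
Your proof is correct. Freezing $t$ and applying the classical H\"older-based estimate $(|E|/|Q|)^q\le [w]_{A_q}\,w(E)/w(Q)$ with $Q=\lambda B$, $E=B$ (handling $q=1$ directly) is the standard argument behind the result the paper simply quotes from \cite{K2014}. Uniformity in $t$ comes from the supremum over $t$ in the definition of $[\varphi]_{\mathbb{A}_q}$, as you note.
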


Now we introduce the Musielak--Orlicz spaces and Musielak--Orlicz Hardy
spaces.

\begin{definition}\label{2.2}
\begin{itemize}
\item[(i)] {\rm (Musielak--Orlicz space $L^{\varphi}(\mathbb{R}^n)$)} The
Musielak--Orlicz space $L^{\varphi}(\mathbb{R}^n)$ is defined as the space
of all Lebesgue measurable functions $f:\mathbb{R}^n
\rightarrow\mathbb{R}$ such that there exists some $\eta\in(0,\infty)$ for
which the following condition holds:
$$\int_{\mathbb{R}^n}\varphi\Big(x,\frac{|f(x)|}{\eta}\Big)dx<\infty,$$
equipped with the (quasi-)norm
$$\|f\|_{L^{\varphi}(\mathbb{R}^n)}:=\inf\Big\{\eta \in(0,\infty):\int_{\mathbb{R}^n}\varphi\Big(x,\frac{|f(x)|}{\eta}\Big)dx \leq 1\Big\}.$$
For any measurable set $A\subset\mathbb{R}^n$, we define
$$\|f\|_{L^{\varphi}(A)}:=\inf\Big\{\eta>0: \int_A \varphi\Big(x,\frac{|f(x)|}{\eta}\Big)dx\leq 1\Big\}.$$
\item[(ii)] {\rm (Musielak--Orlicz Hardy space $H^{\varphi}(\mathbb{R}^n)$,
\cite{K2014})} The Musielak--Orlicz Hardy space $H^{\varphi}(\mathbb{R}^n)$
is the space of all distributions $f\in \mathcal{S}'(\mathbb{R}^n) $ and
$\psi\in \mathcal S(\mathbb{R}^n)$, such that
$\mathcal{M}_\psi f \in L^{\varphi}(\mathbb{R}^n)$ with the (quasi-)norm
$$\|f\|_{H^{\varphi}(\mathbb{R}^n)}:=\|\mathcal{M}_\psi f\|_{L^{\varphi}(\mathbb{R}^n)},$$
where $\mathcal{M}_\psi f(x):=\sup_{t\in(0,\infty)}|f*\psi_t(x)|$ denotes
the maximal function of a distribution $f$ for each $x\in \mathbb{R}^n$
and $\psi_t(\cdot)=t^{-n}\psi(t^{-1}\cdot)$.
\end{itemize}
\end{definition}

It should be pointed out that the Musielak--Orlicz Hardy space
$H^{\varphi}(\mathbb{R}^n)$ was introduced by Ky in \cite{K2014} and
provides a unified framework for several classical Hardy-type spaces
(see also \cite{FMY,YLK}). In fact, when $\varphi(x,t)=t^p$ for
$0<p\leq1$, $H^{\varphi}(\mathbb{R}^n)$ reduces to the classical
Hardy space $H^p(\mathbb{R}^n)$. When $\varphi(x,t)=\omega(x)t^p$
with $\omega\in A_\infty(\mathbb{R}^n)$ and $0<p\leq1$, the space
$H^{\varphi}(\mathbb{R}^n)$ coincides with the weighted Hardy space
$H_\omega^p(\mathbb{R}^n)$.

Before introducing $H^{\varphi}$ atom, let us recall the definition
of $L_\varphi^q(\mathbb{R}^n)$ spaces.

\begin{definition}\label{def2.3}
For each ball $B$ in $\mathbb{R}^n$, we denote
$L_{\varphi}^q(B), q\in(q(\varphi),\infty]$, the set of all measurable
functions $f$ on $\mathbb{R}^n$ satisfying $\supp f \subset B$ such that
$$\|f\|_{L_{\varphi}^q(B)}:=
\begin{cases}\displaystyle\sup\limits_{t>0}\Big(\frac{1}{\varphi(B,t)}{\int_B|f(x)|^q \varphi(x, t) d x}\Big)^{1 / q}<\infty, & 1 \leq q<\infty, \\ \|f\|_{L^{\infty}(\mathbb R^n)}<\infty, & q=\infty.\end{cases}
$$
\end{definition}

Now we introduce the definition of $H^{\varphi}$ atom.

\begin{definition}\label{def2.4}
Let $\varphi$ be a growth function and set
$m(\varphi):=[n(\frac{q(\varphi)}{i(\varphi)}-1)]$. Let $q\in(q(\varphi),
\infty]$ and $s\in\mathbb{N}$ satisfy $s\geq m(\varphi)$. A measurable
function $a$ is called an $(H^\varphi,q,s)$-atom associated with a ball
$B\subset\mathbb{R}^n$ if
\begin{itemize}
\item[(i)] ${\rm supp}a\subset B$;
\item[(ii)] $\|a\|_{L_\varphi^q(B)}\leq\|\chi_B\|_{L^\varphi(\mathbb{R}^n)}^{-1}$;
\item[(iii)] $\displaystyle\int_{\mathbb{R}^n}a(x)x^\beta\,dx=0$ for every
multi-index $\beta\in\mathbb{N}^n$ with $|\beta|\leq s$.
\end{itemize}
\end{definition}

The following atomic extension criterion is very useful in the proof of
Theorem \ref{thm1.1}.

\begin{lemma}{\rm(\cite{K2014})}\label{lem2.2}
Let $\varphi$ be a growth function and $s\in\mathbb{Z}_+$ satisfy
$s\geq m(\varphi)$. Let $\mathcal{X}$ be a quasi-Banach space.
Suppose that one of the following conditions holds:

\begin{itemize}
\item[(i)] $q\in(q(\varphi),\infty)$, and $T$ is defined on the space
of finite linear combinations of $(H^\varphi,q,s)$-atoms and satisfies
$$\sup\{\|Ta\|_{\mathcal{X}}:a\ \text{is an }(H^\varphi,q,s)\text{-atom}\}<\infty;$$
\item[(ii)] $T$ is defined on the space of finite linear combinations
of continuous $(H^\varphi,\infty,s)$-atoms and satisfies
$$\sup\{\|Ta\|_{\mathcal{X}}:a\ \text{is a continuous }(H^\varphi,\infty,s)\text{-atom}\}<\infty.$$
\end{itemize}
Then $T$ admits a bounded extension from $H^\varphi(\mathbb{R}^n)$
into $\mathcal{X}$.
\end{lemma}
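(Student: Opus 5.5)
The plan is the standard route for atomic extension criteria: first prove the bound on a dense subspace where $T$ is already defined, then extend by continuity. Let $H^{\varphi,q,s}_{\rm fin}(\mathbb{R}^n)$ be the space of finite linear combinations of $(H^\varphi,q,s)$-atoms (in case (ii), of continuous $(H^\varphi,\infty,s)$-atoms). I will use Ky's atomic decomposition of $H^\varphi(\mathbb{R}^n)$ from \cite{K2014}. Every $f\in H^\varphi(\mathbb{R}^n)$ can be written as $f=\sum_j\lambda_j a_j$ in $\mathcal{S}'$, with $a_j$ atoms supported in balls $B_j$, and
$$\Lambda(\{\lambda_j a_j\}):=\inf\Big\{\lambda>0:\sum_j\varphi\Big(B_j,\frac{|\lambda_j|}{\lambda\|\chi_{B_j}\|_{L^\varphi(\mathbb{R}^n)}}\Big)\le1\Big\}\simeq\|f\|_{H^\varphi(\mathbb{R}^n)}.$$
In particular $H^{\varphi,q,s}_{\rm fin}(\mathbb{R}^n)$ is dense in $H^\varphi(\mathbb{R}^n)$: truncate the series and use the $\mathcal{M}_\psi$-characterization together with the dominated convergence property of $L^\varphi$. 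In case (ii), density of continuous $\infty$-atom combinations follows by mollifying, since convolution with a smooth compactly supported bump preserves support (up to doubling the ball), size and moments.

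The core step is a finite atomic norm equivalence in the spirit of Meda--Sjögren--Vallarino. I want to show that for $f\in H^{\varphi,q,s}_{\rm fin}(\mathbb{R}^n)$ one has
$$\|f\|_{H^{\varphi,q,s}_{\rm fin}}:=\inf\Big\{\Lambda(\{\lambda_j a_j\}_{j=1}^N):f=\sum_{j=1}^N\lambda_j a_j\Big\}\lesssim\|f\|_{H^\varphi(\mathbb{R}^n)}.$$
The infimum runs over finite decompositions only. For $q<\infty$ I would take $f$ with $\|f\|_{H^\varphi}=1$ and $\supp f\subset B(0,R)$. I then run the Calderón--Zygmund decomposition at levels $2^k$ over the sets $\{\mathcal{M}_\psi f>2^k\}$. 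All pieces at small levels $k\le k_0$ are grouped and shown to be a bounded multiple of a single atom supported in $B(0,CR)$. This uses the moment conditions and the integrability of $\mathcal{M}_\psi f$ off $B(0,2R)$. The large-level tail converges in $L^q_\varphi$. A single $L^q_\varphi$-small function supported in a ball with vanishing moments is itself a small multiple of an atom, which makes the decomposition finite. For $q=\infty$ the same argument works, except that uniform convergence of the tail requires $f$ continuous. That is why (ii) is restricted to continuous atoms. I expect this finite-decomposition step to be the main obstacle, particularly the control of the small-level part through the growth condition on $\varphi$ and Lemma \ref{lem2.1}.

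With the norm equivalence in hand, take a finite decomposition $f=\sum_{j=1}^N\lambda_j a_j$ with $\Lambda\lesssim\|f\|_{H^\varphi}$. Let $A:=\sup\|Ta\|_{\mathcal{X}}$, the supremum in the hypothesis. Equip $\mathcal{X}$ with an equivalent $\rho$-norm by the Aoki--Rolewicz theorem. Then linearity of $T$ on finite sums gives
$$\|Tf\|_{\mathcal{X}}^\rho\lesssim A^\rho\sum_j|\lambda_j|^\rho.$$
It remains to bound $\sum_j|\lambda_j|^\rho$ by $\Lambda^\rho$.

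Set $\lambda=\Lambda$ and $s_j=|\lambda_j|/\Lambda\le C$. By the definition of $\|\chi_{B_j}\|_{L^\varphi}$ we have $\varphi(B_j,\|\chi_{B_j}\|_{L^\varphi}^{-1})=1$. Uniform upper type $1$ then gives $\varphi(B_j,s_j\|\chi_{B_j}\|_{L^\varphi}^{-1})\gtrsim s_j$ for $s_j\le1$, so $\sum_j s_j\lesssim1$. This handles $\rho=1$, the Banach case.

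For $\rho<1$ the comparison needs more care. I would try to replace upper type $1$ by an upper type $\rho'$ estimate with $\rho'\ge\rho$ on $s\in(0,1]$, using the concavity-type behaviour of a growth function. If that fails, I would renormalize $\mathcal{X}$ as a $\rho$-normed space with $\rho$ adapted to $\varphi$. In the applications in this paper $\mathcal{X}=L^\varphi(\mathbb{R}^n)$, and there the modular is additive over disjoint pieces, so one can instead work directly with $\int\varphi(x,|Tf|/\lambda)\,dx$. This gives $\|Tf\|_{\mathcal{X}}\lesssim\|f\|_{H^\varphi}$ on a dense subspace, and $T$ extends uniquely by continuity, since $\mathcal{X}$ is complete.
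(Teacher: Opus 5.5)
The paper gives no proof of this lemma; it is quoted from \cite{K2014}. Your architecture is the one used there: density of finite atomic combinations, a Meda--Sj\"ogren--Vallarino type equivalence between the finite atomic quasi-norm and $\|\cdot\|_{H^\varphi(\mathbb{R}^n)}$, then extension by continuity. Your last step is also correct when $\mathcal X$ is a Banach space, since uniform upper type $1$ gives $\varphi(B_j,s\|\chi_{B_j}\|_{L^\varphi(\mathbb{R}^n)}^{-1})\gtrsim s$ for $s\le1$.

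\textbf{The quasi-Banach case is not proved.} You leave the genuinely quasi-Banach case open, and none of your proposed fixes works. If $\|\cdot\|_{\mathcal X}$ is a $\rho$-norm, the inequality $\sum_j|\lambda_j|^\rho\lesssim\Lambda^\rho$ needs $\varphi(x,st)\gtrsim s^{\rho}\varphi(x,t)$ for $s\in(0,1]$, i.e.\ that $\varphi$ be of uniformly upper type $\rho$. Asking for an upper type $\rho'\ge\rho$ points the wrong way: you already have $\rho'=1$, and it only controls $\sum_j s_j^{\rho'}$. A growth function need not have any upper type below $1$ (take $\varphi(x,t)=t$). Nor can $\mathcal X$ be renormalized with a larger exponent: $\ell^\gamma$ has no equivalent $\rho$-norm with $\rho>\gamma$.

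\textbf{The statement is false as written.} For an arbitrary quasi-Banach $\mathcal X$ there is a counterexample. Let $\varphi(x,t)=t^p$ with $\frac{n}{n+1}<p<1$ and $\mathcal X=\ell^\gamma$ with $\gamma<p$. Let $B_k=B(x_k,1)$ with $|x_k|=2^{2^k}$, choose $\psi_k\in C_c^\infty(B_k)$ with $\|\psi_k\|_{C^1}\lesssim1$, and set $Tf:=(\langle f,\psi_k\rangle)_{k\ge1}$.
\begin{itemize}
\item An atom on a ball of radius $r\le1$ meets at most one $B_k$, and by cancellation $|\langle a,\psi_k\rangle|\lesssim r^{n+1-n/p}\le1$.
\item A ball of radius $r>1$ meets $O(1+\log\log r)$ of the $B_k$, and each pairing is $\lesssim r^{n-n/p}$.
\item Hence $\sup_a\|Ta\|_{\ell^\gamma}<\infty$.
\item Take atoms $a_k$ on $B_k$ with $\langle a_k,\psi_k\rangle=c>0$. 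Then $\|\sum_{k\le N}a_k\|_{H^p}\lesssim N^{1/p}$, while $\|T(\sum_{k\le N}a_k)\|_{\ell^\gamma}=cN^{1/\gamma}$.
\end{itemize}
So an extra hypothesis is unavoidable. The criterion in \cite{K2014} carries exactly such a hypothesis: a $\gamma$-quasi-Banach target with $\varphi$ of uniformly upper type $\gamma$. Under it, your final step goes through verbatim with $\rho=\gamma$.

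\textbf{The $L^\varphi$ fallback does not close the gap.} The functions $Ta_j$ are not disjointly supported. More seriously, $\|Ta\|_{L^\varphi(\mathbb{R}^n)}\le A$ only gives $\int_{\mathbb{R}^n}\varphi(x,s|Ta(x)|/A)\,dx\lesssim s^{\mathfrak p}$ with $\mathfrak p<i(\varphi)$. The term of $\Lambda$ attached to $\lambda_ja_j$, by contrast, can be as small as about $s_j$. This mismatch is real. Take the Orlicz function $\varphi(t)=\max\{t^p,t\}$ and $Tf:=\sum_k\langle f,\psi_k\rangle\chi_{E_k}$, where the $\psi_k$ are bounded bumps on very sparse balls $B_k$ with $|B_k|\le2^{-k}$ and the $E_k$ are disjoint sets of unit measure. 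Every atom is mapped into a bounded subset of $L^\varphi$. Yet $f_N:=N^{-1}\sum_{k=N+1}^{2N}a_k$ satisfies $\|f_N\|_{H^\varphi}\lesssim1$ and $\|Tf_N\|_{L^\varphi}\simeq N^{1/p-1}$.

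Power-type $\varphi(x,t)=w(x)t^p$ is the exception, since there $L^\varphi$ is $p$-normed and $\varphi$ has uniform upper type $p$. Otherwise a modular argument needs a modular hypothesis on atoms, such as $\int_{\mathbb{R}^n}\varphi(x,\mu|Ta(x)|)\,dx\lesssim\varphi(B,\mu\|\chi_B\|_{L^\varphi(\mathbb{R}^n)}^{-1})$ for all $\mu>0$. This combines with the subadditivity $\varphi(x,\sum_jt_j)\le C\sum_j\varphi(x,t_j)$, which follows from uniform upper type $1$. A uniform norm bound on atoms is not enough, and that bound is all the lemma assumes and all the paper checks when it applies the lemma with $\mathcal X=L^\varphi(\mathbb{R}^n)$.
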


\begin{remark}\label{rem2.1}
In view of the assumptions of Theorem \ref{thm1.1}, we have
$\varphi\in\mathbb{A}_{i(\varphi)(1+\alpha/n)}$ and $i(\varphi)
>\frac{n}{n+\alpha}$. By the openness property of the Muckenhoupt classes,
$q(\varphi)<i(\varphi)(1+\frac{\alpha}{n})$. Hence,
$$n\Big(\frac{q(\varphi)}{i(\varphi)}-1\Big)<\alpha\leq1,$$
and therefore $m(\varphi)=0$.
\end{remark}

\subsection{$\mathcal{BMO}_{\varphi}(\mathbb{R}^n)$ spaces}

In this subsection, we give the definitions of $\mathcal {BMO}_{\varphi}
(\mathbb{R}^n)$ space, $\mathrm {BMO}(\mathbb{R}^n)$ space and some basic
facts.

\begin{definition}\label{def2.5}
\begin{itemize}
\item[(i)] {\rm ($\mathcal{BMO}_{\varphi}(\mathbb{R}^n)$ spaces)} Let $\varphi$
be a growth function with
$$\int_{\mathbb{R}^n}\varphi(x,(1+|x|)^{-n})dx<\infty.$$
A locally integrable function $b\in L_{\rm loc}^1(\mathbb{R}^n)$ is said to be
in $\mathcal{BMO}_{\varphi}(\mathbb{R}^n)$ if
$$\|b\|_{\mathcal{BMO}_{\varphi}(\mathbb{R}^n)}:=\sup\limits_{B \subset \mathbb{R}^n}\Big\{\frac{1}{\|\chi_B\|_{L^{\varphi}(\mathbb{R}^n)}}
\int_B|b(x)-b_B| d x\| | \cdot-x_B|^{-n} \|_{L^{\varphi}(B^c)}\Big\}<\infty,$$
where the supremum is taken over all balls $B\subset \mathbb{R}^n$.
\item[(ii)] {\rm ($\mathrm{BMO}(\mathbb{R}^n)$ space)} The
 space $\rm {BMO}(\mathbb R^n)$ consists of all locally integrable functions
$b:\mathbb{R}^n\rightarrow\mathbb{R}$ satisfying
$$\|b\|_{\mathrm{BMO}(\mathbb{R}^n)}:=\sup\limits_{B\subset\mathbb{R}^n}\frac{1}{|B|}\int_B|b(x)-b_B|dx<\infty.$$
\end{itemize}
\end{definition}

\begin{remark}\label{rem2.2}
\begin{itemize}
\item[(i)] When $\varphi(x,t)=w(x)t^p$ with $w\in A_\infty(\mathbb{R}^n)$
and $0<p\leq1$, the space $\mathcal{BMO}_{\varphi}(\mathbb{R}^n)$ reduces to
$\mathcal{BMO}_{w,p}(\mathbb{R}^n)$.
\item[(ii)] The space $\mathcal{BMO}_{\varphi}(\mathbb{R}^n)$ is  a nontrivial
  subspace of $\mathrm{BMO}(\mathbb{R}^n)$. Moreover,
$$\|b\|_{\mathrm{BMO}(\mathbb{R}^n)} \lesssim \|b\|_{\mathcal{B M O}_{\varphi}(\mathbb{R}^n)},\ \ \forall b \in \mathcal{BMO}_{\varphi}(\mathbb{R}^n).$$
\end{itemize}
\end{remark}

The following weighted John--Nirenberg type estimate will be useful in our proof.

\begin{lemma}[{\cite{LKY}}]\label{lem2.3}
Assume that $\varphi\in\mathbb{A}_\infty$ and $q\in(0,\infty)$. Then,
uniformly for $t>0$ and every ball $B\subset\mathbb{R}^n$,
$$\Big(\frac{1}{\varphi(B,t)}\int_B |b(x)-b_B|^q\varphi(x,t)dx\Big)^{1/q}
\lesssim\|b\|_{\mathrm{BMO}(\mathbb{R}^n)},\ \ \ \forall b\in\mathrm{BMO}(\mathbb{R}^n).$$
\end{lemma}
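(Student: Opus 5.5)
The plan is to combine the classical John--Nirenberg inequality with a uniform-in-$t$ quantitative $A_\infty$ property of $\varphi$, and then integrate via the layer-cake formula. To avoid confusion with the exponent $q$ in the statement, write $\varphi\in\mathbb{A}_r$ for some fixed $r\in[1,\infty)$; such an $r$ exists since $\varphi\in\mathbb{A}_\infty$. We may assume $\|b\|_{\mathrm{BMO}(\mathbb{R}^n)}>0$.

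\textbf{Step 1 (John--Nirenberg).} There are constants $c_1,c_2>0$, depending only on $n$, such that for every ball $B$ and every $\lambda>0$,
$$|E_\lambda|\le c_1 e^{-c_2\lambda/\|b\|_{\mathrm{BMO}(\mathbb{R}^n)}}|B|,\qquad E_\lambda:=\{x\in B:\ |b(x)-b_B|>\lambda\}.$$

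\textbf{Step 2 (uniform $A_\infty$ estimate).} I want constants $C,\delta>0$, independent of $t$, of $B$ and of the measurable set $E\subset B$, such that
$$\frac{\varphi(E,t)}{\varphi(B,t)}\le C\Big(\frac{|E|}{|B|}\Big)^{\delta}.$$
For each fixed $t$, the weight $w_t:=\varphi(\cdot,t)$ lies in $A_r$ with $[w_t]_{A_r}\le[\varphi]_{\mathbb{A}_r}$. The reverse H\"older inequality then gives $\big(\frac1{|B|}\int_B w_t^{1+\epsilon}\big)^{1/(1+\epsilon)}\le C'\frac1{|B|}\int_B w_t$. Here $\epsilon$ and $C'$ depend only on $n$, $r$ and an upper bound for $[w_t]_{A_r}$, hence are uniform in $t$. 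Applying H\"older's inequality on $E$ yields the displayed estimate with $\delta=\epsilon/(1+\epsilon)$. This uniformity is the only delicate point. I would justify it by noting that the standard proofs of reverse H\"older, e.g.\ via a Calder\'on--Zygmund decomposition, track constants only through $n$, $r$ and $[w]_{A_r}$.

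\textbf{Step 3 (layer cake).} By the layer-cake formula,
$$\int_B|b-b_B|^q\varphi(x,t)\,dx=q\int_0^\infty \lambda^{q-1}\varphi(E_\lambda,t)\,d\lambda .$$
Steps 1 and 2 give
$$\varphi(E_\lambda,t)\le C c_1^{\delta}\, e^{-c_2\delta\lambda/\|b\|_{\mathrm{BMO}(\mathbb{R}^n)}}\,\varphi(B,t).$$
Substituting, the right-hand side is at most
$$C\,q\int_0^\infty\lambda^{q-1}e^{-c_2\delta\lambda/\|b\|_{\mathrm{BMO}(\mathbb{R}^n)}}\,d\lambda\ \varphi(B,t)=C_q\|b\|_{\mathrm{BMO}(\mathbb{R}^n)}^q\,\varphi(B,t).$$
This integral converges for every $q\in(0,\infty)$. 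Dividing by $\varphi(B,t)$ and taking $q$-th roots gives the claim, with an implicit constant independent of $t$ and $B$.
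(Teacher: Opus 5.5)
Your proof is correct. Note that the paper gives no proof of this lemma: it is quoted from \cite{LKY}, so there is no internal argument to compare against, and your write-up supplies a complete one.

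The chain you use is sound, and it works for every $q\in(0,\infty)$, including $q<1$:
\begin{itemize}
\item John--Nirenberg for the unweighted oscillation $|b-b_B|$;
\item the uniform comparison $\varphi(E,t)/\varphi(B,t)\le C(|E|/|B|)^{\delta}$;
\item the layer-cake formula.
\end{itemize}
You also correctly isolated the one nontrivial point: the reverse H\"older exponent and constant for $\varphi(\cdot,t)$ depend only on $n$, $r$ and an upper bound for $[\varphi]_{\mathbb{A}_r}$, so they do not depend on $t$. The paper tacitly relies on the same uniformity when it invokes openness of the $\mathbb{A}_q$ classes in Remark 2.1 and Lemma 2.7.

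A slightly shorter standard route skips the layer-cake step. Let $s=1+\epsilon$ be the uniform reverse H\"older exponent. H\"older's inequality then gives
$$\int_B|b-b_B|^q\varphi(x,t)\,dx\le |B|\Big(\frac{1}{|B|}\int_B|b-b_B|^{qs'}dx\Big)^{1/s'}\Big(\frac{1}{|B|}\int_B\varphi(x,t)^{s}dx\Big)^{1/s}\lesssim\|b\|_{\mathrm{BMO}(\mathbb{R}^n)}^q\,\varphi(B,t).$$
This needs only the $L^{p}$ form of John--Nirenberg together with $\big(\frac{1}{|B|}\int_B\varphi(x,t)^s\,dx\big)^{1/s}\lesssim\varphi(B,t)/|B|$.

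Your version costs a few more lines but proves something stronger. It gives the weighted exponential decay $\varphi(E_\lambda,t)\lesssim e^{-c\lambda/\|b\|_{\mathrm{BMO}(\mathbb{R}^n)}}\varphi(B,t)$ uniformly in $t$ and $B$, from which every $q$ follows at once with an explicit constant of order $\Gamma(q+1)^{1/q}$.
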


\begin{lemma}{\rm(\cite{HK})}\label{lem2.4}
Let $\varphi$ be a growth function and $q\in(q(\varphi),\infty]$. Then
$$\int_B\varphi(x,|f(x)|)dx\lesssim\varphi(B,\|f\|_{L_\varphi^q(B)})$$
and
$$\int_B\varphi(x,|b(x)-b_B|)dx\lesssim\varphi(B,\|b\|_{\mathrm{BMO}(\mathbb{R}^n)})$$
for all balls $B\subset\mathbb{R}^n$, $f\in L_\varphi^q(B)$ and
$b\in\mathrm{BMO}(\mathbb{R}^n)$.
\end{lemma}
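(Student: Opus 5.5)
The plan is to prove both inequalities by one truncation argument. The tools are the monotonicity of $\varphi(x,\cdot)$, the uniformly upper type $1$ property of the growth function $\varphi$, and a weighted average estimate taken at the particular level $t=\lambda$ given by the relevant norm. Throughout, $\lambda$ denotes $\|f\|_{L^q_\varphi(B)}$ in the first inequality and $\|b\|_{\mathrm{BMO}(\mathbb{R}^n)}$ in the second. We may assume $\lambda>0$, since otherwise both sides vanish.

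For the first inequality, split $B$ into $E_1:=\{x\in B:|f(x)|\le\lambda\}$ and $E_2:=\{x\in B:|f(x)|>\lambda\}$.
\begin{itemize}
\item On $E_1$, $\varphi(x,\cdot)$ is nondecreasing, so $\varphi(x,|f(x)|)\le\varphi(x,\lambda)$. Hence $\int_{E_1}\varphi(x,|f(x)|)\,dx\le\varphi(B,\lambda)$.
\item On $E_2$, apply the uniformly upper type $1$ property with $s=|f(x)|/\lambda\ge1$ and $t=\lambda$. This gives $\varphi(x,|f(x)|)\le C\frac{|f(x)|}{\lambda}\varphi(x,\lambda)$.
\item If $q=\infty$, then $E_2$ is null and we are done.
\item If $q\in(q(\varphi),\infty)$, then $q\ge1$. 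H\"older's inequality with respect to the measure $\varphi(x,\lambda)\,dx$ gives
$$\int_{E_2}\frac{|f(x)|}{\lambda}\varphi(x,\lambda)\,dx\le\frac{1}{\lambda}\Big(\int_B|f(x)|^q\varphi(x,\lambda)\,dx\Big)^{1/q}\varphi(B,\lambda)^{1-1/q}.$$
The definition of $\|f\|_{L^q_\varphi(B)}$, used with the particular choice $t=\lambda$, bounds the first factor by $\lambda\,\varphi(B,\lambda)^{1/q}$. So this term is at most $\varphi(B,\lambda)$.
\end{itemize}
Adding the two pieces yields the claim.

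For the second inequality, run the same splitting with $|b(x)-b_B|$ in place of $|f(x)|$. The set where $|b-b_B|\le\lambda$ again contributes at most $\varphi(B,\lambda)$. On the complementary set, upper type $1$ reduces matters to bounding $\lambda^{-1}\int_B|b(x)-b_B|\varphi(x,\lambda)\,dx$. For this we invoke Lemma \ref{lem2.3} with exponent $q=1$ and at the level $t=\lambda$, which is legitimate because $\varphi\in\mathbb{A}_\infty$ and the estimate is uniform in $t$. It gives $\int_B|b-b_B|\varphi(x,\lambda)\,dx\lesssim\lambda\,\varphi(B,\lambda)$, and hence the desired bound $\varphi(B,\|b\|_{\mathrm{BMO}(\mathbb{R}^n)})$.

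The only delicate point is to evaluate all weighted quantities at the single level $t=\lambda$, so that the suprema over $t$ in the definition of $L^q_\varphi(B)$ and in Lemma \ref{lem2.3} can be specialized. It also matters that upper type $1$ is used only for $s\ge1$. Beyond this, no real obstacle is expected.
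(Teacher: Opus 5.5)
Your proof is correct. The paper gives no proof of this lemma to compare against: it quotes the result from \cite{HK}.

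Your argument is the natural one. Monotonicity of $\varphi(x,\cdot)$ together with uniform upper type $1$ gives the pointwise bound $\varphi(x,|f(x)|)\le C\,(1+|f(x)|/\lambda)\,\varphi(x,\lambda)$. H\"older's inequality with respect to $\varphi(x,\lambda)\,dx$, combined with the definition of $\|f\|_{L^q_\varphi(B)}$ evaluated at the single level $t=\lambda$, then yields the first estimate. Lemma \ref{lem2.3} with exponent $1$ at level $t=\|b\|_{\mathrm{BMO}(\mathbb{R}^n)}$ yields the second. Your handling of the degenerate case $\lambda=0$ is also fine, since $\varphi(x,t)>0$ for $t>0$.

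An equivalent alternative is to deduce the second estimate from the first. Lemma \ref{lem2.3} with some exponent $q>q(\varphi)$ gives $\|(b-b_B)\chi_B\|_{L^q_\varphi(B)}\lesssim\|b\|_{\mathrm{BMO}(\mathbb{R}^n)}$. Applying the first estimate and absorbing the constant through $\varphi(B,C\lambda)\lesssim C\,\varphi(B,\lambda)$ (upper type $1$) then finishes. Your direct use of exponent $1$ avoids this detour.

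Note also that your argument for the first inequality uses only $q\ge 1$. The restriction $q>q(\varphi)$ is therefore not needed for that estimate; it matters only for consistency with the atom definition elsewhere in the paper.
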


\begin{lemma}[{\cite{HK}}]\label{lem2.5}
Let $\varphi$ be a growth function satisfying $\int_{\mathbb{R}^n}\varphi
(x,(1+|x|)^{-n})dx<\infty$. Then, for every $b\in\mathcal{BMO}_{\varphi}
(\mathbb{R}^n)$ and every $(H^\varphi,\infty,0)$-atom $a$ associated with
a ball $B\subset\mathbb{R}^n$,
$$\|(b-b_B)a\|_{H^\varphi(\mathbb{R}^n)}\lesssim\|b\|_{\mathcal{BMO}_{\varphi}(\mathbb{R}^n)}.$$
\end{lemma}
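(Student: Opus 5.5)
Lemma \ref{lem2.5} is cited from \cite{HK}, but here is how I would prove it. Fix an $(H^\varphi,\infty,0)$-atom $a$ supported in $B=B(x_B,r_B)$ and set $f:=(b-b_B)a$. The plan is to estimate the grand (or radial) maximal function $\mathcal M_\psi f$ pointwise, separately on $2B$ and on $(2B)^c$, and then combine the two pieces in $L^\varphi$. Since $L^\varphi$ is only a quasi-norm, I will show $\int_{\mathbb R^n}\varphi(x,\mathcal M_\psi f(x)/(C\|b\|_{\mathcal{BMO}_\varphi}))\,dx\lesssim 1$, which is equivalent to the stated bound.

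On $2B$: $\mathcal M_\psi f\lesssim \mathcal M f$ (Hardy--Littlewood), and $|f|\le \|\chi_B\|_{L^\varphi}^{-1}|b-b_B|\chi_B$. I will take $q>q(\varphi)$ and use the $L^q(\varphi(\cdot,t))$-boundedness of $\mathcal M$, valid uniformly in $t$ since $\varphi(\cdot,t)\in A_q$ uniformly. Combining this with Lemma \ref{lem2.3} shows that $f\|\chi_B\|_{L^\varphi}/\|b\|_{\mathrm{BMO}}$ behaves like a normalized $L^q_\varphi(2B)$-function. Lemma \ref{lem2.4}, together with the $\mathbb A_q$ doubling of Lemma \ref{lem2.1}, then gives
\[
\int_{2B}\varphi\Big(x,\frac{\mathcal M f(x)}{\|b\|_{\mathrm{BMO}}}\Big)dx\lesssim \varphi\big(B,\|\chi_B\|_{L^\varphi}^{-1}\big)\lesssim 1,
\]
where the last step uses the uniform lower/upper type. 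By Remark \ref{rem2.2}(ii), $\|b\|_{\mathrm{BMO}}\lesssim\|b\|_{\mathcal{BMO}_\varphi}$, so this piece is controlled by the right quantity.

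Off $2B$: the function $f$ has no vanishing moment, so I will use only the size estimate. For $x\notin 2B$ and $\psi$ supported in the unit ball, $\psi_t(x-\cdot)$ meets $B$ only when $t\gtrsim|x-x_B|$. Hence
\[
\mathcal M_\psi f(x)\lesssim |x-x_B|^{-n}\int_B|f|\le |x-x_B|^{-n}\|\chi_B\|_{L^\varphi}^{-1}\int_B|b-b_B|.
\]
By the definition of $\mathcal{BMO}_\varphi$, the $L^\varphi(B^c)$ quasi-norm of the right-hand side is at most $\|b\|_{\mathcal{BMO}_\varphi}$. The integrability hypothesis $\int\varphi(x,(1+|x|)^{-n})dx<\infty$ is exactly what guarantees that $\||\cdot-x_B|^{-n}\|_{L^\varphi(B^c)}$ is finite, so the definition is non-vacuous. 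This tail piece is essentially built into the definition.

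The main obstacle is the local piece. The difficulty is to get a bound by $\|b\|_{\mathcal{BMO}_\varphi}$ uniformly in the size of $\|\chi_B\|_{L^\varphi}^{-1}$, because $\varphi$ is non-homogeneous and one cannot simply factor out constants. I would handle this by using the uniform upper type 1 and lower type $p$ to move the factor $\|\chi_B\|_{L^\varphi}^{-1}$ into the second argument of $\varphi$. Lemma \ref{lem2.4} is then applied with $t=\|\chi_B\|_{L^\varphi}^{-1}$, where $\varphi(B,\|\chi_B\|^{-1}_{L^\varphi})=1$.

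Finally, the two integrals are summed using the quasi-triangle inequality of the modular to conclude $\|(b-b_B)a\|_{H^\varphi}\lesssim\|b\|_{\mathcal{BMO}_\varphi}$.
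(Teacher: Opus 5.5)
Your proposal is correct and is essentially the standard Huy--Ky argument, which the paper cites without reproducing; its local half is the same computation the paper repeats in the proof of Lemma~\ref{lem2.7}. On $2B$ you bound $\mathcal M_\psi((b-b_B)a)$ by the Hardy--Littlewood maximal function and use its uniform weighted $L^q$ bound, Lemmas~\ref{lem2.3}, \ref{lem2.4}, \ref{lem2.1} and $\|b\|_{\mathrm{BMO}(\mathbb{R}^n)}\lesssim\|b\|_{\mathcal{BMO}_\varphi(\mathbb{R}^n)}$; on $(2B)^c$ the size bound $|x-x_B|^{-n}\|\chi_B\|_{L^\varphi(\mathbb{R}^n)}^{-1}\int_B|b-b_B|$ is controlled directly by the definition of $\mathcal{BMO}_\varphi(\mathbb{R}^n)$, and taking $\psi$ compactly supported is harmless since Schwartz decay gives the same tail bound.
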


We shall also use the following elementary estimate in the converse argument.

\begin{lemma}[{\cite{LKY}}]\label{lem2.6}
Suppose that $f$ is a measurable function with $\operatorname{supp}f\subset
B:=B(x_B,r_B)$. Then, for any $x\in\mathbb{R}^n\setminus B$,
$$\frac{1}{|x-x_B|^n}\Big|\int_B f(y)dy\Big|\lesssim\mathcal{M}_{\psi}f(x).$$
\end{lemma}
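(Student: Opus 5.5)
The estimate is local and elementary, and I would prove it by a direct pointwise comparison between the average of $f$ over $B$ and a single well-chosen term in the supremum defining $\mathcal{M}_\psi f(x)$. Fix $x\notin B$ and put $t:=c|x-x_B|$ with a constant $c$ to be chosen. Since $\mathcal{M}_\psi f(x)\ge |f*\psi_t(x)|$ for every $t>0$, it suffices to show, for this one value of $t$, that
\[
|f*\psi_t(x)|\gtrsim \frac{1}{|x-x_B|^n}\Big|\int_B f(y)\,dy\Big|.
\]
Here I take $\psi\in\mathcal S(\mathbb{R}^n)$ to be the fixed test function in the definition of $H^\varphi(\mathbb{R}^n)$, normalized as usual so that $\psi\ge0$, $\psi\ge 1$ on $B(0,2)$ say, or more generally $\int\psi\neq0$ with $\psi$ positive near a suitable region. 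The precise normalization would be borrowed from \cite{LKY}.

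Write
\[
f*\psi_t(x)=t^{-n}\int_B f(y)\psi\Big(\frac{x-y}{t}\Big)dy .
\]
For $y\in B$ and $x\notin B$ we have $|x-y|\le |x-x_B|+r_B\le 2|x-x_B|$. With $t=|x-x_B|$, the argument $(x-y)/t$ therefore lies in $B(0,2)$. Split
\[
\psi\Big(\frac{x-y}{t}\Big)=\psi\Big(\frac{x-x_B}{t}\Big)+\Big[\psi\Big(\frac{x-y}{t}\Big)-\psi\Big(\frac{x-x_B}{t}\Big)\Big].
\]
The first term is a constant in $y$, bounded below by a positive constant because $(x-x_B)/t$ lies on the unit sphere, where $\psi\ge1$. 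This term produces exactly $t^{-n}\psi(\cdot)\int_B f$, which is $\gtrsim |x-x_B|^{-n}|\int_B f|$.

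The main obstacle is the error term, since $f$ has no size control and $|\int_B f|$ may be small compared with $\int_B|f|$. The clean way around it is to avoid the splitting altogether by using the freedom in $\psi$. The argument works whenever $\psi$ is constant on $B(0,2)$, for instance $\psi\equiv1$ on $B(0,2)$, which is legitimate since $H^\varphi$ can be defined with any $\psi\in\mathcal S$ with $\int\psi\ne0$. With such a $\psi$ and $t=|x-x_B|$, the integrand's weight equals $1$ on all of $B$, so
\[
f*\psi_t(x)=|x-x_B|^{-n}\int_B f(y)\,dy
\]
exactly, and the lemma follows with constant $1$. If the paper insists on a general $\psi$, I would invoke the equivalence of grand-maximal and $\psi$-maximal characterizations of $H^\varphi$ from \cite{K2014}, reduce to this particular $\psi$, and absorb the comparison constant into $\lesssim$.
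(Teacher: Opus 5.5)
Your core argument is correct. Choose $\psi\in\mathcal{S}(\mathbb{R}^n)$ with $\psi\ge0$ and $\psi\equiv1$ on $B(0,2)$, and take $t=|x-x_B|\ge r_B$. Then every $y\in B$ satisfies $|x-y|<|x-x_B|+r_B\le 2t$, so $f*\psi_t(x)=|x-x_B|^{-n}\int_B f(y)\,dy$ exactly. You also correctly saw why a $\psi$ that is merely positive on $B(0,2)$ fails: the error term is controlled only by $\int_B|f|$, not by $|\int_B f|$. The paper gives no proof of its own and simply cites \cite{LKY}; your identity is the natural argument behind that lemma.

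Your last sentence, however, does not work, and no argument could make it work. The equivalence of maximal characterizations in \cite{K2014} is an equivalence of $L^\varphi$ quasi-norms. It is not a pointwise comparison of $\mathcal{M}_{\psi}f$ and $\mathcal{M}_{\psi'}f$ for different test functions. Pointwise you only have $\mathcal{M}_{\psi}f\le C_\psi\,\mathcal{M}f$, with the grand maximal function $\mathcal{M}f$ on the larger side, and that is the wrong direction for transferring your lower bound to an arbitrary $\psi$.

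In fact the pointwise inequality is false for some admissible $\psi$. Let $\psi\in C_c^\infty(\mathbb{R}^n)$ with $\psi\ge0$, $\int\psi=1$ and $\supp\psi\subset B(e_1,1/2)$. Take $B=B(0,1)$, $x=-2e_1$ and $f=\chi_B$. For $y\in B$ and $t>0$ the point $(x-y)/t$ has negative first coordinate, so $f*\psi_t(x)=0$ for every $t$ and $\mathcal{M}_\psi f(x)=0$. Yet the left-hand side equals $2^{-n}|B|$.

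So the lemma has to be read in one of two ways: with a specifically chosen $\psi$ (your version), or with the grand maximal function on the right, which follows from your identity via $\mathcal{M}_\psi f\le C_\psi\mathcal{M}f$. Either reading suffices for the paper. Lemma \ref{lem2.6} is used only in the proof of $((ii)\Rightarrow(i))$ of Theorem \ref{thm1.1}, after taking $\|\cdot\|_{L^\varphi(B^c)}$, to bound $\|(b-b_B)\widetilde a\|_{H^\varphi(\mathbb{R}^n)}$. That quantity changes only by constants when $\psi$ is changed.
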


\medskip

\subsection{Proof of Theorem \ref{thm1.1}}

We first establish two auxiliary lemmas and then prove Theorem \ref{thm1.1}; by Remark \ref{rem2.1}, it suffices to use atoms with $s=0$.

\begin{lemma}\label{lem2.7}
Let $0<\alpha\leq1$, and let $\varphi$ be a growth function satisfying
$\varphi\in\mathbb{A}_{i(\varphi)(1+\alpha/n)}$ and
$n/(n+\alpha)<i(\varphi)\leq1$. Then, for every
$b\in\mathrm{BMO}(\mathbb{R}^n)$ and every $(H^\varphi,\infty,0)$-atom
$a$ associated with a ball $B=B(x_0,r)$,
$$\big\||b-b_B|g_\alpha(a)\big\|_{L^\varphi(\mathbb{R}^n)}\lesssim \|b\|_{\mathrm{BMO}(\mathbb{R}^n)}.$$
\end{lemma}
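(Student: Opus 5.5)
We split $\mathbb{R}^n$ into the near region $2B$ and the far region $(2B)^c$. Since $L^\varphi$ is a quasi-Banach space whose modular is of uniformly lower type $p$ and upper type $1$, it suffices to show
$$\int_{\mathbb{R}^n}\varphi\Big(x,\frac{|b(x)-b_B|g_\alpha(a)(x)}{\|b\|_{\mathrm{BMO}}}\Big)dx\lesssim 1$$
for every atom $a$ (after normalizing $\|b\|_{\mathrm{BMO}}=1$). We write $\lambda_B:=\|\chi_B\|_{L^\varphi}^{-1}$, so that $\|a\|_\infty\le\lambda_B$ and $\varphi(B,\lambda_B)\simeq1$. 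The last relation follows from the definition of the Luxemburg norm together with the uniform lower and upper type properties.

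\emph{Near part.} On $2B$ we use the $L^q(\varphi(\cdot,t))$ boundedness of $g_\alpha$ for $\varphi(\cdot,t)\in A_q$, uniformly in $t$ (Wilson/Lerner). We also use Hölder in the weighted measure, with Lemma~\ref{lem2.3} controlling $|b-b_{2B}|$ and $|b_{2B}-b_B|\lesssim1$. Concretely, by the upper type $1$,
$$\int_{2B}\varphi(x,|b-b_B|g_\alpha(a))dx\le\int_{2B}(1+|b-b_B|)\,\varphi(x,g_\alpha(a)+\lambda_B)\Big(1+\frac{g_\alpha(a)}{\lambda_B}\Big)^{0}\dots$$
More cleanly, $\varphi(x,st)\lesssim \max(s,s^{p})\varphi(x,t)$, so
$$\varphi(x,|b-b_B|g_\alpha a)\lesssim (1+|b-b_B|)\Big(1+\frac{g_\alpha a}{\lambda_B}\Big)\varphi(x,\lambda_B).$$
Hölder with respect to $\varphi(x,\lambda_B)dx$ on $2B$, with exponents $q'$ and $q$ for some $q>q(\varphi)$, together with Lemma~\ref{lem2.3} and the weighted $L^q$ bound $\int g_\alpha(a)^q\varphi(x,\lambda_B)\lesssim\lambda_B^q\varphi(B,\lambda_B)$, gives the bound $\lesssim\varphi(2B,\lambda_B)\lesssim\varphi(B,\lambda_B)\lesssim1$ by Lemma~\ref{lem2.1}.

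\emph{Far part.} For $x\notin2B$ we use the cancellation $\int a=0$ and the Hölder regularity of $\phi\in\mathcal C_\alpha$. The convolution $a*\phi_t(x)$ vanishes unless $t\gtrsim|x-x_0|$, and then $|a*\phi_t(x)|\le\|a\|_1 r^\alpha t^{-n-\alpha}$. This yields the standard decay
$$g_\alpha(a)(x)\lesssim\lambda_B\frac{r^{n+\alpha}}{|x-x_0|^{n+\alpha}}.$$
Decompose $(2B)^c=\bigcup_k(2^{k+1}B\setminus2^kB)$. On the $k$-th annulus we have $g_\alpha(a)\lesssim 2^{-k(n+\alpha)}\lambda_B$, and $|b-b_B|\le|b-b_{2^{k+1}B}|+Ck$. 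Using the lower type $p<i(\varphi)$ close to $i(\varphi)$, the upper type $1$ for the $(1+|b-b_{2^{k+1}B}|+k)$ factor, Lemma~\ref{lem2.3} (in the $L^1(\varphi(\cdot,\lambda_B))$ form, or Lemma~\ref{lem2.4}), and Lemma~\ref{lem2.1} with $q$ slightly bigger than $q(\varphi)$, the annulus contributes
$$\lesssim (1+k)\,2^{-k(n+\alpha)p}\varphi(2^{k+1}B,\lambda_B)\lesssim(1+k)\,2^{-k[(n+\alpha)p-nq]}.$$

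\emph{Summation, the main obstacle.} The series converges provided $(n+\alpha)p>nq$, i.e.\ $q<p(1+\alpha/n)$. This is exactly where the hypothesis $\varphi\in\mathbb{A}_{i(\varphi)(1+\alpha/n)}$ enters. By openness (Remark~\ref{rem2.1}), $q(\varphi)<i(\varphi)(1+\alpha/n)$, so we may choose $p<i(\varphi)$ and $q>q(\varphi)$ with $q<p(1+\alpha/n)$; we must also ensure $\varphi$ is of lower type $p$ and $\varphi\in\mathbb A_q$. The logarithmic factor $1+k$ from the BMO oscillation is harmless against geometric decay. Combining the near and far estimates gives the modular bound, hence the norm bound $\lesssim\|b\|_{\mathrm{BMO}}$ by homogeneity.
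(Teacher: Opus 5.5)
Your proof is correct. The far-field part is essentially the paper's argument: the decay $g_\alpha(a)(x)\lesssim \lambda_B\,(r/|x-x_0|)^{n+\alpha}$ off $2B$, dyadic annuli, lower type $\mathfrak p$ plus upper type $1$ to extract the factor $1+|b-b_{2^{k+1}B}|+k$, Lemma~\ref{lem2.1} for $\mathbb A_{\mathfrak q}$, and summability from $\mathfrak q<\mathfrak p(1+\alpha/n)$.

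You genuinely differ on the local part over $2B$. The paper takes a supremum over all levels $t$ and bounds the $L^{q_0}_\varphi(2B)$-norm of $(b-b_{2B})g_\alpha(a)$ by a three-exponent H\"older inequality. It converts this into a Luxemburg bound with Lemma~\ref{lem2.4}. It then handles the constant piece $(b_{2B}-b_B)g_\alpha(a)$ separately, using the $H^\varphi\to L^\varphi$ boundedness of $g_\alpha$.

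You instead work at the single level $t=\lambda_B=\|\chi_B\|_{L^\varphi}^{-1}$. From $\varphi(x,st)\lesssim\max(s,s^{p})\varphi(x,t)$ you reduce to
\[
\int_{2B}(1+|b-b_B|)\Big(1+\frac{g_\alpha(a)}{\lambda_B}\Big)\varphi(x,\lambda_B)\,dx .
\]
A two-exponent H\"older inequality with respect to $\varphi(x,\lambda_B)\,dx$ then finishes the job. Lemma~\ref{lem2.3} controls the first factor and absorbs $|b_{2B}-b_B|\lesssim1$ directly. Wilson's weighted $L^q$ bound for the one weight $\varphi(\cdot,\lambda_B)$ controls the second. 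The result is $\lesssim\varphi(2B,\lambda_B)\lesssim\varphi(B,\lambda_B)\simeq1$.

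Your route is more self-contained. It needs neither Lemma~\ref{lem2.4} nor the Hardy-space boundedness of $g_\alpha$. It also gives one uniform modular estimate for both regions, followed by a single homogeneity step. The paper's route is organized around Ky's $L^q_\varphi$ atom norms, which is the standard machinery in this setting. Both approaches rest on the same two analytic inputs: weighted $L^q$ bounds for $g_\alpha$ with constants uniform in the $\mathbb A_q$ characteristic, and the weighted John--Nirenberg inequality.

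Delete the abandoned first display in your near-part paragraph, the one ending in $(\cdots)^{0}\dots$. The ``more cleanly'' version is the actual argument.
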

\begin{proof}
Let $q_0\in(q(\varphi),\infty)$ and $q_1,q_2\in(1,\infty)$ such that
${1}/{q_0}={1}/{q_1}+{1}/{q_2}$. Then $\varphi(\cdot,t)\in A_{q_2}$
uniformly in $t>0$. By H\"older's inequality, the weighted
$L^{q_2}$-boundedness of $g_\alpha$ (see \cite{W}), and
Lemmas \ref{lem2.1} and \ref{lem2.3},
$$\begin{array}{ll}
&\displaystyle\Big(\frac{1}{\varphi(2B,t)}\int_{2B}|(b(x)-b_{2B})g_\alpha(a)(x)|^{q_0}\varphi(x,t)dx\Big)^{1/q_0}\\
&\leq\displaystyle\Big(\frac{1}{\varphi(2B,t)}\int_{2B}|b(x)-b_{2B}|^{q_1}\varphi(x,t)dx\Big)^{1/q_1}\Big(\frac{1}{\varphi(2B,t)}
\int_{2B}|g_\alpha(a)(x)|^{q_2}\varphi(x,t)dx\Big)^{1/q_2} \\
&\lesssim\displaystyle\|b\|_{\mathrm{BMO}(\mathbb{R}^n)}\Big(\frac{1}{\varphi(2B,t)}\int_{\mathbb{R}^n}|a(x)|^{q_2}\varphi(x,t)dx\Big)^{1/q_2}\\
&\lesssim\|b\|_{\mathrm{BMO}(\mathbb{R}^n)}\|\chi_B\|_{L^{\varphi}(\mathbb{R}^n)}^{-1},
\end{array}$$
for every $t>0$. Hence, we get by Lemmas \ref{lem2.1} and \ref{lem2.4} that
$$\begin{array}{ll}
&\|(b-b_{2B})g_\alpha(a)\|_{L^{\varphi}(2B)}\lesssim\displaystyle\inf\Big\{\eta>0:\varphi\Big(2B,\frac{\|(b(\cdot)-b_{2B})g_\alpha(a)(\cdot)\|_{L_{\varphi}^{q_0}(2B)}}{\eta}\Big)\leq 1\Big\}\\
&\qquad\qquad\qquad\qquad\qquad\lesssim\displaystyle\inf\Big\{\eta>0:\varphi(B, \frac{\|b\|_{\mathrm{BMO}(\mathbb{R}^n)}\|\chi_B\|_{L^{\varphi}(\mathbb{R}^n)}^{-1}}{\eta}\Big)\leq 1\Big\}\\
&\qquad\qquad\qquad\qquad\qquad\lesssim\|b\|_{\mathrm {BMO}(\mathbb{R}^n)}.
\end{array}$$
In view of $|b_{2B}-b_B|\lesssim\|b\|_{\mathrm{BMO}(\mathbb{R}^n)}$
and the boundedness of $g_\alpha:H^\varphi(\mathbb{R}^n)\rightarrow
L^\varphi(\mathbb{R}^n)$, we further obtain
$$\|(b-b_B)g_\alpha(a)\|_{L^\varphi(2B)}\lesssim \|b\|_{\mathrm{BMO}(\mathbb{R}^n)}.$$

For $x\in(2B)^c$ and $\phi\in\mathcal C_\alpha$, the cancellation
condition of $a$ gives
\begin{align*}
|a*\phi_t(x)|
&=\left|\int_B[\phi_t(x-y)-\phi_t(x-x_0)]a(y)\,dy\right|\\
&\lesssim \frac{r^\alpha}{t^{n+\alpha}}\int_B|a(y)|\,dy
\lesssim \frac{r^{n+\alpha}}{t^{n+\alpha}}
\|\chi_B\|_{L^\varphi(\mathbb{R}^n)}^{-1}.
\end{align*}
Observe that if $a*\phi_t(x)\neq0$, then there exists $y\in B$ such that
$|x-y|<t$. Moreover, for $x\in(2B)^c$,
$$t>|x-y|\geq |x-x_0|-|y-x_0|\geq\frac{|x-x_0|}{2}.$$
It follows that
$$\begin{array}{ll}
&|g_\alpha(a)(x)|^2\lesssim\displaystyle\|\chi_B\|_{L^\varphi(\mathbb{R}^n)}^{-2}r^{2\alpha+2n}\int_{|x-x_0|/2}^\infty\frac{dt}{t^{2(n+\alpha)+1}}\\
&\qquad\qquad\quad\lesssim\displaystyle\|\chi_B\|_{L^\varphi(\mathbb{R}^n)}^{-2}r^{2\alpha+2n}\frac{1}{|x-x_0|^{2n+2\alpha}}.
\end{array}$$
In particular, for $x\in2^{j+1}B\setminus2^jB$, $j\geq1$,
$$g_\alpha(a)(x)\lesssim2^{-j(n+\alpha)}\|\chi_B\|_{L^\varphi(\mathbb{R}^n)}^{-1}.$$

By the openness property of the uniformly Muckenhoupt classes, we may
choose $\mathfrak p\in(0,i(\varphi))$ and $\mathfrak q\in(1,
\mathfrak p(1+\alpha/n))$ such that $\varphi\in\mathbb A_{\mathfrak q}$.
Moreover, by the uniformly upper type $1$ property of $\varphi$, Lemmas
\ref{lem2.1} and~\ref{lem2.4}, and the standard estimate
$|b_{2^jB}-b_B|\lesssim j\|b\|_{\mathrm{BMO}(\mathbb{R}^n)}$, we have
$$\int_{2^jB}\varphi\Big(x,\frac{|b(x)-b_B|}{\eta}\Big)dx\lesssim2^{jn\mathfrak q}(j+1)
\varphi\Big(B,\frac{\|b\|_{\mathrm{BMO}(\mathbb{R}^n)}}{\eta}\Big)$$
for every $\eta>0$. Using the uniformly lower type $\mathfrak p$
property of $\varphi$ and the preceding pointwise estimate, we obtain
$$\begin{array}{ll}
&\displaystyle\int_{(2B)^c}\varphi\Big(x,\frac{|b(x)-b_B|g_\alpha(a)(x)}{\eta\|\chi_B\|_{L^\varphi(\mathbb{R}^n)}^{-1}}\Big)dx\\
&\lesssim\displaystyle\sum\limits_{j=1}^\infty2^{-j(\mathfrak p(n+\alpha)-n\mathfrak q)}(j+2)\varphi\Big(B,\frac{\|b\|_{\mathrm{BMO}(\mathbb{R}^n)}}{\eta}\Big)
\lesssim\varphi\Big(B,\frac{\|b\|_{\mathrm{BMO}(\mathbb{R}^n)}}{\eta}\Big),
\end{array}$$
where  the second inequality follows from the fact that $\mathfrak q<\mathfrak p(1+\alpha/n)$. In view of the
definition of the Luxemburg norm, one gets
$$\|(b-b_B)g_\alpha(a)\|_{L^\varphi((2B)^c)}\lesssim \|b\|_{\mathrm{BMO}(\mathbb{R}^n)}.$$
Combining the local and far-field estimates completes the proof of Lemma
\ref{lem2.7}.
\end{proof}

\begin{lemma}\label{lem2.8}
Let $0<\alpha\leq1$ and $2+2\alpha/n<\lambda<\infty$, and let $\varphi$ be a
growth function satisfying $\varphi\in\mathbb{A}_{i(\varphi)(1+\alpha/n)}$ and
$n/(n+\alpha)<i(\varphi)\leq1$. Then, for every $b\in\mathrm{BMO}(\mathbb{R}^n)$
and every $(H^\varphi,\infty,0)$-atom $a$ associated with a ball $B=B(x_0,r)
\subset\mathbb{R}^n$,
$$\big\||b-b_B|g_{\lambda,\alpha}^{*}(a)\big\|_{L^\varphi(\mathbb{R}^n)}
\lesssim\|b\|_{\mathrm{BMO}(\mathbb{R}^n)}.$$
\end{lemma}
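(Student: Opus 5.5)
We follow the scheme of Lemma \ref{lem2.7}: a local estimate on $2B$ and a far-field estimate on $(2B)^c$ via a pointwise decay bound for $g_{\lambda,\alpha}^*(a)$.

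\emph{Local part.} Take $q_0\in(q(\varphi),\infty)$ and $1/q_0=1/q_1+1/q_2$ as in Lemma \ref{lem2.7}, so that $\varphi(\cdot,t)\in A_{q_2}$ uniformly in $t$. We need the weighted $L^{q_2}(\varphi(\cdot,t))$ boundedness of $g_{\lambda,\alpha}^*$, uniformly in $t$. Since $\lambda>2$, Wilson's pointwise domination (see \cite{W}) gives $g^*_{\lambda,\alpha}(f)\lesssim\sum_{k\ge0}2^{-k(\lambda n-n)/2}S_{\alpha,2^k}(f)$, where $S_{\alpha,2^k}$ is the area function over the aperture-$2^k$ cone. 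Combined with $\|S_{\alpha,2^k}f\|_{L^{q_2}_w}\lesssim 2^{kn\max(1/2,1/q_2)}\|S_\alpha f\|_{L^{q_2}_w}$-type aperture estimates for $A_{q_2}$ weights, this should suffice. Hölder's inequality, Lemma \ref{lem2.3}, Lemma \ref{lem2.1} and then Lemma \ref{lem2.4} give $\|(b-b_{2B})g^*_{\lambda,\alpha}(a)\|_{L^\varphi(2B)}\lesssim\|b\|_{\mathrm{BMO}}$. The constant $b_{2B}-b_B$ is handled with $\|g^*_{\lambda,\alpha}(a)\|_{L^\varphi}\lesssim1$, which also follows from the local weighted bound plus the decay below.

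\emph{Pointwise decay off $2B$.} By cancellation, $|a*\phi_t(y)|\lesssim \|\chi_B\|_{L^\varphi}^{-1} r^{n+\alpha}t^{-n-\alpha}$, and this is nonzero only if $t>|y-x_0|-r$. Fix $x\in 2^{j+1}B\setminus 2^jB$ with $j\ge1$, and split the $(y,t)$-integral into two regions.

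\emph{Region $t\ge |x-x_0|/4$.} Use $(t/(t+|x-y|))^{\lambda n}\le1$ together with $\int_{\{|y-x_0|<t+r\}}dy\lesssim t^n$ to get a contribution $\lesssim\|\chi_B\|^{-2}r^{2(n+\alpha)}|x-x_0|^{-2(n+\alpha)}$.

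\emph{Region $t<|x-x_0|/4$.} Here $|y-x_0|<t+r$, so $|x-y|\gtrsim|x-x_0|$, and the kernel is $\lesssim t^{\lambda n}|x-x_0|^{-\lambda n}$. The contribution is therefore
\[
\|\chi_B\|^{-2}|x-x_0|^{-\lambda n}\int_0^{|x-x_0|}t^{\lambda n}\min\bigl(1,(r/t)^{2(n+\alpha)}\bigr)(t+r)^n t^{-n-1}\,dt .
\]
Because $\lambda n>2n+2\alpha$ (this is where $\lambda>2+2\alpha/n$ enters), the integrand grows in $t$ and the integral is dominated by its endpoint. This yields again $\lesssim\|\chi_B\|^{-2}r^{2(n+\alpha)}|x-x_0|^{-2(n+\alpha)}$, up to possibly a harmless logarithm if the exponent is borderline (it is not, since the inequality is strict).

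Hence $g^*_{\lambda,\alpha}(a)(x)\lesssim 2^{-j(n+\alpha)}\|\chi_B\|_{L^\varphi}^{-1}$ on $2^{j+1}B\setminus2^jB$, exactly as for $g_\alpha$.

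\emph{Far part.} This repeats verbatim the argument of Lemma \ref{lem2.7}. Choose $\mathfrak p<i(\varphi)$ and $\mathfrak q<\mathfrak p(1+\alpha/n)$ with $\varphi\in\mathbb A_{\mathfrak q}$. Using the uniformly lower type $\mathfrak p$ property, Lemmas \ref{lem2.1} and \ref{lem2.4}, and $|b_{2^jB}-b_B|\lesssim j\|b\|_{\mathrm{BMO}}$, we obtain a summable series $\sum_j 2^{-j(\mathfrak p(n+\alpha)-n\mathfrak q)}(j+2)$, and hence the $L^\varphi((2B)^c)$ bound.

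The main obstacle is the local weighted bound for $g^*_{\lambda,\alpha}$ under the weaker range $\lambda>2+2\alpha/n$. The aperture-dependent constant must be summable against $2^{-k(\lambda-1)n/2}$; if the naive aperture bound is too lossy for large $q_2$, I would first try taking $q_2$ close to $q(\varphi)$ (near $1$) and invoking Lerner's sharp aperture estimates.
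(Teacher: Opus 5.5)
Your proposal is essentially the paper's proof, with the same three ingredients. The local estimate comes from the weighted $L^{q_2}$ bound for $g^*_{\lambda,\alpha}$ together with H\"older's inequality and Lemmas~\ref{lem2.1}, \ref{lem2.3} and~\ref{lem2.4}. The pointwise decay $g^*_{\lambda,\alpha}(a)(x)\lesssim 2^{-j(n+\alpha)}\|\chi_B\|_{L^\varphi(\mathbb{R}^n)}^{-1}$ comes from cancellation and a split of the $t$-integral at $|x-x_0|/4$, with $\lambda>2+2\alpha/n$ used exactly where you use it; the paper works off $8B$ and separates $t<r$ from $r\le t<|x-x_0|/4$, which you merge. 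The far part uses the same lower type $\mathfrak p$ / $\mathbb{A}_{\mathfrak q}$ summation.

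The weighted bound you flag as the main obstacle is simply quoted from \cite{W} in the paper, and you do not need sharp aperture theory to justify it. Take $q_2=2$; this is allowed because $q(\varphi)<1+\alpha/n\le 2$ lets you choose $q_0\in(q(\varphi),2)$. Split with the factor $2^{-k\lambda n/2}$, which is the right one for cones normalized by $dy\,dt/t^{n+1}$. For $w\in A_2$, Fubini and doubling give $\|S_{\alpha,2^k}f\|_{L^2(w)}^2=\iint A_\alpha(f)(y,t)^2\,w(B(y,2^kt))\,\frac{dy\,dt}{t^{n+1}}\lesssim 2^{2kn}\|S_\alpha f\|_{L^2(w)}^2$. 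Combined with Wilson's $L^2(w)$ bound for $S_\alpha$, the resulting series $\sum_k 2^{-k\lambda n/2}2^{kn}$ converges for every $\lambda>2$. By contrast, pairing this $2^{kn}$ growth with your factor $2^{-k(\lambda-1)n/2}$ would force $\lambda>3$. Also, the unweighted aperture growth $2^{kn\max(1/2,1/q_2)}$ you quote is not available for general $A_{q_2}$ weights.
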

\begin{proof}
As in the proof of Lemma \ref{lem2.7}, we decompose $\mathbb{R}^n$ into
$8B$ and $(8B)^c$. The estimate over $8B$ follows from the weighted
boundedness of $g_{\lambda,\alpha}^{*}$ (see \cite{W}) and the
same argument used for the local part of Lemma \ref{lem2.7}. Thus, it
remains to consider the far-field part.

Let $x\in 2^{j+4}B\setminus 2^{j+3}B$, $j\in\mathbb{N}_0$. We first establish
$$g_{\lambda,\alpha}^{*}(a)(x)\lesssim 2^{-j(n+\alpha)}\|\chi_B\|_{L^\varphi(\mathbb{R}^n)}^{-1}.\eqno(2.1)$$
For any $\phi\in\mathcal{C}_\alpha$, by the cancellation condition of $a$,
$$\begin{array}{ll}
&\displaystyle\Big|\int_{\mathbb{R}^n}\phi_t(y-z)a(z)dz\Big|=\displaystyle\Big|\int_B\big[\phi_t(y-z)-\phi_t(y-x_0)\big]a(z)dz\Big|\\
&\qquad\qquad\qquad\qquad\qquad\lesssim\displaystyle\int_B\frac{|z-x_0|^\alpha}{t^{n+\alpha}}|a(z)|dz
\lesssim \frac{r^{n+\alpha}}{t^{n+\alpha}}\|\chi_B\|_{L^\varphi(\mathbb{R}^n)}^{-1}.
\end{array}$$
Moreover, since $\operatorname{supp}\phi\subset B(0,1)$ and
$\|\phi\|_{L^\infty}\lesssim1$,  we have
$$A_\alpha(a)(y,t)\lesssim \|\chi_B\|_{L^\varphi(\mathbb{R}^n)}^{-1}
\min\left\{1,\left(\frac{r}{t}\right)^{n+\alpha}\right\}.$$
Observe that $\operatorname{supp}a\subset B(x_0,r)$ and $\operatorname{supp}
\phi_t\subset B(0,t)$. It follows that $A_\alpha(a)(y,t)=0$ whenever
$|y-x_0|>r+t$. Therefore,
$$\begin{array}{ll}
&|g_{\lambda,\alpha}^{*}(a)(x)|^2\lesssim\displaystyle\|\chi_B\|_{L^\varphi(\mathbb{R}^n)}^{-2}\iint_{|y-x_0|<r+t}
\Big(\frac{t}{t+|x-y|}\Big)^{\lambda n}\min\Big\{1,\Big(\frac{r}{t}\Big)^{2(n+\alpha)}\Big\}\frac{dydt}{t^{n+1}}\\
&\qquad\qquad\qquad=:\|\chi_B\|_{L^\varphi(\mathbb{R}^n)}^{-2}(I_1+I_2+I_3),
\end{array}\eqno(2.2)$$
where
$$I_1:=\iint_{|y-x_0|<r+t,\atop 0<t<r}
\Big(\frac{t}{t+|x-y|}\Big)^{\lambda n}\min\Big\{1,\Big(\frac{r}{t}\Big)^{2(n+\alpha)}\Big\}\frac{dydt}{t^{n+1}},$$
$$I_2:=\iint_{|y-x_0|<r+t,\atop r\leq t<|x-x_0|/4}
\Big(\frac{t}{t+|x-y|}\Big)^{\lambda n}\min\Big\{1,\Big(\frac{r}{t}\Big)^{2(n+\alpha)}\Big\}\frac{dydt}{t^{n+1}},$$
$$I_3:=\iint_{|y-x_0|<r+t,\atop t\geq|x-x_0|/4}
\Big(\frac{t}{t+|x-y|}\Big)^{\lambda n}\min\Big\{1,\Big(\frac{r}{t}\Big)^{2(n+\alpha)}\Big\}\frac{dydt}{t^{n+1}}.$$

For $I_1$, since $|y-x_0|<r+t<2r$ and $|x-y|\gtrsim |x-x_0|$, we obtain
$$I_1\lesssim \int_0^r\int_{|y-x_0|<2r}\Big(\frac{t}{|x-x_0|}\Big)^{\lambda n}\frac{dy\,dt}{t^{n+1}}
\lesssim\Big(\frac{r}{|x-x_0|}\Big)^{\lambda n}\lesssim\Big(\frac{r}{|x-x_0|}\Big)^{2(n+\alpha)},$$
where the last inequality follows from $\lambda>2+2\alpha/n$.

For $I_2$, in view of $|y-x_0|<r+t<2t$ and $|x-y|\gtrsim |x-x_0|$, one gets
$$\begin{array}{ll}
&I_2 \lesssim \displaystyle\int_r^{|x-x_0|/4}\int_{|y-x_0|<2t}\Big(\frac{t}{|x-x_0|}\Big)^{\lambda n}\Big(\frac{r}{t}\Big)^{2(n+\alpha)}\frac{dydt}{t^{n+1}}\\
&\quad\lesssim\displaystyle\frac{r^{2(n+\alpha)}}{|x-x_0|^{\lambda n}}\int_r^{|x-x_0|/4}t^{\lambda n-2(n+\alpha)-1}dt\lesssim\Big(\frac{r}{|x-x_0|}\Big)^{2(n+\alpha)}.
\end{array}$$
We also note that
$$I_3\lesssim \int_{|x-x_0|/4}^{\infty}t^n\Big(\frac{r}{t}\Big)^{2(n+\alpha)}\frac{dt}{t^{n+1}}
\lesssim\Big(\frac{r}{|x-x_0|}\Big)^{2(n+\alpha)}$$
by
$$\int_{\mathbb{R}^n}\Big(\frac{t}{t+|x-y|}\Big)^{\lambda n}dy\lesssim t^n.$$
The above estimates together with (2.2) imply that
$$g_{\lambda,\alpha}^{*}(a)(x)\lesssim \|\chi_B\|_{L^\varphi(\mathbb{R}^n)}^{-1}\Big(\frac{r}{|x-x_0|}\Big)^{n+\alpha}
\lesssim 2^{-j(n+\alpha)}\|\chi_B\|_{L^\varphi(\mathbb{R}^n)}^{-1}.$$
This yields (2.1).

Next we choose $\mathfrak{p}\in(0,i(\varphi))$ and $\mathfrak{q}\in(1,
\mathfrak{p}(1+\alpha/n))$ such that $\varphi\in\mathbb{A}_{\mathfrak{q}}$.
Using the uniformly lower type $\mathfrak{p}$ property of $\varphi$,
together with the preceding pointwise estimate, we obtain
$$\begin{array}{ll}
&\displaystyle\int_{(8B)^c}\varphi\Big(x,\frac{|b(x)-b_B|\,g_{\lambda,\alpha}^{*}(a)(x)}
{\eta\|\chi_B\|_{L^\varphi(\mathbb{R}^n)}^{-1}}\Big)dx\\
&\lesssim\displaystyle\sum\limits_{j=0}^{\infty}2^{-j[\mathfrak{p}(n+\alpha)-n\mathfrak{q}]}(j+2)
\varphi\Big(B,\frac{\|b\|_{\mathrm{BMO}(\mathbb{R}^n)}}{\eta}\Big)
\lesssim\displaystyle\varphi\Big(B,\frac{\|b\|_{\mathrm{BMO}(\mathbb{R}^n)}}{\eta}\Big),
\end{array}$$
where the second inequality follows from $\mathfrak{q}<\mathfrak{p}(1+\alpha/n)$. Hence,
$$\begin{array}{ll}
&\|(b-b_B)g_{\lambda,\alpha}^{*}(a)\|_{L^\varphi((8B)^c)}
\lesssim\displaystyle\|\chi_B\|_{L^\varphi(\mathbb{R}^n)}^{-1}\inf\Big\{\eta>0:
\varphi\Big(B,\frac{\|b\|_{\mathrm{BMO}(\mathbb{R}^n)}}{\eta}\Big)\leq1\Big\}\\
&\qquad\qquad\qquad\qquad\qquad\qquad\lesssim \|b\|_{\mathrm{BMO}(\mathbb{R}^n)}.
\end{array}$$
Combining this with (2.1) and the estimate over $8B$, we conclude that
$$\|(b-b_B)g_{\lambda,\alpha}^{*}(a)\|_{L^\varphi(\mathbb{R}^n)}
\lesssim \|b\|_{\mathrm{BMO}(\mathbb{R}^n)}.$$
This completes the proof.
\end{proof}

Before giving the proof of Theorem \ref{thm1.1}, we first recall the
Littlewood--Paley $g$-function and Littlewood--Paley $g_\lambda^*$-function.
Let $\vartheta\in\mathcal{S}(\mathbb{R}^n)$ be a radial function satisfying
$\operatorname{supp}\vartheta\subset\{x\in\mathbb{R}^n:|x|\leq1\}$,
$\int_{\mathbb{R}^n}\vartheta(x)\,dx=0$ and, for every
$\xi\in\mathbb{R}^n\setminus\{0\}$,
$$\int_0^\infty|\widehat{\vartheta}(t\xi)|^2\frac{dt}{t}=1.$$
Let $\vartheta_t(\cdot)=t^{-n}\vartheta(t^{-1}\cdot)$. For
$f\in\mathcal{S}'(\mathbb{R}^n)$, the usual Littlewood--Paley
$g$-function is defined by
$$g(f)(x):=\Big(\int_0^\infty |f*\vartheta_t(x)|^2\frac{dt}{t}\Big)^{1/2},\qquad x\in\mathbb{R}^n.$$
Meanwhile, the Littlewood--Paley $g_\lambda^*$-function is defined by
$$g_\lambda^*(f)(x):=\Big(\int_0^\infty\int_{\mathbb{R}^n}\Big(\frac{t}{t+|x-y|}\Big)^{\lambda n}|f*\vartheta_t(y)|^2\frac{dy\,dt}{t^{n+1}}\Big)^{1/2},\qquad x\in\mathbb{R}^n.$$

Finally, we present the proof of Theorem \ref{thm1.1}.

\begin{proof}[\bf Proof of Theorem \ref{thm1.1}]
{\it Proof of $((i)\Rightarrow(ii))$}. By Lemma \ref{lem2.2}, it suffices
to prove that
$$\|g_{\alpha,b}(a)\|_{L^{\varphi}(\mathbb{R}^n)}\lesssim\|b\|_{\mathcal{BMO}_{\varphi}(\mathbb{R}^n)},\eqno(2.3)$$
for all continuous $(H^{\varphi},\infty,0)$-atom $a$ related to the ball
$B \subset \mathbb{R}^n$. Indeed, it follows from the boundedness of
$g_\alpha$ from $H^{\varphi}(\mathbb{R}^n)$ to $L^{\varphi}(\mathbb{R}^n)$
(see \cite[Theorem~1.6]{LY}) and Lemmas \ref{lem2.5} and \ref{lem2.7} that
$$\begin{array}{ll}
&\|g_{\alpha,b}(a)\|_{L^{\varphi}(\mathbb{R}^n)}\lesssim\|(b-b_B) g_\alpha(a)\|_{L^{\varphi}(\mathbb{R}^n)}+\|g_\alpha((b-b_B)a)\|_{L^{\varphi}(\mathbb{R}^n)}\\
&\qquad\qquad\qquad\quad\lesssim\|b\|_{\mathrm {BMO}(\mathbb{R}^n)}+\|g_\alpha\|_{H^{\varphi}(\mathbb{R}^n)\rightarrow L^{\varphi}(\mathbb{R}^n)}\|(b-b_B) a\|_{H^{\varphi}(\mathbb{R}^n)} \\
&\qquad\qquad\qquad\quad\lesssim\|b\|_{\mathrm {BMO}(\mathbb{R}^n)}+\|b\|_{\mathcal{B M O}_{\varphi}(\mathbb{R}^n)} \lesssim\|b\|_{\mathcal{B M O}_{\varphi}(\mathbb{R}^n)} .
\end{array}$$
This proves (2.3).

{\it Proof of $((ii)\Rightarrow(i))$}. Let $a$ be an $(H^\varphi,\infty,
0)$-atom associated with a ball $B:=B(x_0,r)$. By Lemma \ref{lem2.7},
$$\|g_\alpha((b-b_B)a)\|_{L^\varphi(\mathbb{R}^n)}
\lesssim\|g_{\alpha,b}\|_{H^\varphi(\mathbb{R}^n)\to L^\varphi(\mathbb{R}^n)}
+\|b\|_{\mathrm{BMO}(\mathbb{R}^n)}.$$
Since the classical Littlewood--Paley $g$-function satisfies
$g(f)\lesssim g_\alpha(f)$ (see the proof of \cite[Theorem~1.6]{LY}), it
follows that
$$\|g((b-b_B)a)\|_{L^\varphi(\mathbb{R}^n)}\lesssim
\|g_{\alpha,b}\|_{H^\varphi(\mathbb{R}^n)\to L^\varphi(\mathbb{R}^n)}+\|b\|_{\mathrm{BMO}(\mathbb{R}^n)}.$$
In order to apply the Littlewood--Paley $g$-function characterization of
$H^\varphi(\mathbb{R}^n)$ from \cite[Theorem~4.4]{LHY}, we need to verify
the required behavior at infinity. Set $h:=(b-b_B)a$. Since
$\operatorname{supp}h\subset B$ and $h\in L^1(\mathbb{R}^n)$,  for
every $\phi\in\mathcal S(\mathbb{R}^n)$,
$$\|h*\phi_t\|_{L^\infty(\mathbb{R}^n)}\leq t^{-n}\|h\|_{L^1(\mathbb{R}^n)}\|\phi\|_{L^\infty(\mathbb{R}^n)}\longrightarrow0\qquad\text{as }t\to\infty.$$
Thus $h$ vanishes weakly at infinity. Hence, applying \cite[Theorem~4.4]{LHY},
one gets $(b-b_B)a\in H^\varphi(\mathbb{R}^n)$ and
$$\|(b-b_B)a\|_{H^\varphi(\mathbb{R}^n)}\lesssim\|g_{\alpha,b}\|_{H^\varphi(\mathbb{R}^n)\to L^\varphi(\mathbb{R}^n)}+\|b\|_{\mathrm{BMO}(\mathbb{R}^n)}.$$

For any ball $B:=B(x_0,r)\subset\mathbb{R}^n$ with some $x_0\in \mathbb{R}^n$
and $r\in (0,\infty)$, let
$$\widetilde a:=\frac{1}{2\|\chi_B\|_{L^\varphi(\mathbb R^n)}}(f-f_B)\chi_B,$$
where $f:={\rm sign}(b-b_B)$. It is easy to see that $\tilde a $ is an
$(H^\varphi,\infty,0)$-atom related to the ball $B$. Moreover, for every
$x\notin B$, we get by Lemma \ref{lem2.6} that
$$\begin{array}{ll}
&\displaystyle|x-x_B|^{-n}\frac{1}{2\|\chi_B\|_{L^{\varphi}(\mathbb{R}^n)}}\int_B|b(y)-b_B|dy\\
&=\displaystyle|x-x_B|^{-n}\int_B[b(y)-b_B]\tilde a(y)dy\lesssim \mathcal{M}_{\psi}([b-b_B]\tilde a)(x).
\end{array}$$
Thus, we have
$$\| b\|_{\mathcal{BMO}_{\varphi}{(\mathbb{R}^n)}}\lesssim \big\|g_{\alpha,b}\big\|_{H^\varphi(\mathbb{R}^n)\to L^\varphi(\mathbb{R}^n)}+\|b\|_{\rm BMO(\mathbb{R}^n)}.$$
This completes the proof of $((ii)\Rightarrow(i))$.

{\it Proof of $((i)\Leftrightarrow(iii))$}. By the pointwise comparability
between the intrinsic Lusin area function and the intrinsic Littlewood--Paley
$g$-function (see \cite{GOS,W}), the conclusion for $((i)\Leftrightarrow(iii))$
follows directly from $((i)\Leftrightarrow(ii))$.

{\it Proof of $((i)\Rightarrow(iv))$}. This part follows from Lemmas \ref{lem2.2},
\ref{lem2.5}, and \ref{lem2.8} and the boundedness of $g_{\lambda,\alpha}^{*}:
H^\varphi(\mathbb{R}^n)\rightarrow L^\varphi(\mathbb{R}^n)$ (see
\cite[Theorem~1.8]{LY}).

{\it Proof of $((iv)\Rightarrow(i))$}. By the fact that $g_\lambda^*(f)\lesssim
g_{\lambda,\alpha}^{*}(f)$, the Littlewood--Paley $g_\lambda^*$-function
characterization of $H^\varphi(\mathbb{R}^n)$ from \cite[Theorem~4.8]{LHY},
and the fact that $(b-b_B)a$ vanishes weakly at infinity, we obtain
$$\|(b-b_B)a\|_{H^\varphi(\mathbb{R}^n)}\lesssim
\|g_{\lambda,\alpha,b}^{*}\|_{H^\varphi(\mathbb{R}^n)\to L^\varphi(\mathbb{R}^n)}+\|b\|_{\mathrm{BMO}(\mathbb{R}^n)}.$$
The rest of the proof is similar to that of $((ii)\Rightarrow(i))$. We omit the
details.
\end{proof}

\bigskip

\section{Endpoint estimates: Proof of Theorem \ref{thm1.2}}\label{S3}

In this section we shall prove some endpoint estimates for commutators
of intrinsic square functions including the intrinsic Littlewood--Paley
$g$-function, the intrinsic $g_{\lambda}^{*}$-function and the intrinsic
Lusin area function on Orlicz--Hardy spaces. In contrast to the
Musielak--Orlicz framework considered in Section \ref{S2}, the growth
function we consider  here is independent of the spatial variable.
First, we introduce some notation and preliminary lemmas in
Subsection 3.1. The proof of Theorem \ref{thm1.2} will be given in
Subsection 3.2.

\subsection{Orlicz--Hardy spaces}

Before presenting the definitions of Orlicz--Hardy spaces, let
us recall some basic notions.

\begin{definition}\label{def3.1}
\begin{itemize}
\item[(i)] {\rm (Orlicz function)} A nondecreasing function
$\Phi:[0,\infty)\to[0,\infty)$ is called an Orlicz function if
$\Phi(0)=0$, $\Phi(t)>0$ for all $t>0$, and $\lim_{t\to\infty}\Phi(t)=\infty$.
\item[(ii)] {\rm (Critical lower and upper type indices)} Let $\Phi:[0,\infty)
\to[0,\infty)$ be an Orlicz function. For $q_1\geq0$, we say that $\Phi$ is of
lower type $q_1$ if there exists a constant $C_{q_1}>0$ such that
$\Phi(st)\leq C_{q_1}s^{q_1}\Phi(t)$ for all $t\geq0$ and $s\in(0,1]$. Similarly,
$\Phi$ is said to be of upper type $q_2$, with $q_2\geq0$, if there exists a
constant $C_{q_2}>0$ such that $\Phi(st)\leq C_{q_2}s^{q_2}\Phi(t)$ for all
$t\geq0$ and $s\in[1,\infty)$. The corresponding critical lower and upper type
indices are defined, respectively, by
$$i(\Phi):=\sup\{q_1\geq0:\Phi\text{ is of lower type }q_1\}$$
and
$$I(\Phi):=\inf\{q_2\geq0:\Phi\text{ is of upper type }q_2\}.$$
\item [(iii)] {\rm (Growth function)} A function $\Phi:[0,\infty)\to[0,\infty)$
is a growth function if $\Phi$ is of upper type $1$ and of lower type $q$ for
some $q\in(0,1]$.
\item[(iv)] {\rm (Orlicz space)} Let $\Phi:[0,\infty)\to[0,\infty)$ be an
Orlicz function. The Orlicz space $L^\Phi(\mathbb{R}^n)$ is defined as
the space of all Lebesgue measurable functions $f$ on $\mathbb{R}^n$ such
that
$$\|f\|_{L^\Phi(\mathbb{R}^n)}:=\inf\Big\{\lambda>0:\int_{\mathbb{R}^n}\Phi\Big(\frac{|f(x)|}{\lambda}\Big)\,dx\leq1\Big\}<\infty.$$
\item[(v)] {\rm ($\mathrm{BMO}_{\Phi}(\mathbb{R}^n)$ space)} A locally integrable
function $b$ is said to belong to $\mathrm{BMO}_{\Phi}(\mathbb{R}^n)$ if
$$\|b\|_{\mathrm{BMO}_{\Phi}(\mathbb{R}^n)}:=\sup_{B\subset\mathbb{R}^n}\frac{1}{\|\chi_B\|_{L^\Phi(\mathbb{R}^n)}}\int_B|b(x)-b_B|\,dx<\infty,$$
where the supremum is taken over all balls $B\subset\mathbb{R}^n$.
\end{itemize}
\end{definition}

We next  introduce the Orlicz--Hardy space and the $b$-adapted Orlicz--Hardy space.

\begin{definition}\label{def3.2}
\begin{itemize}
\item[(i)] {\rm (Orlicz--Hardy space)} For $f\in\mathcal{S}'(\mathbb{R}^n)$,
we define the nontangential grand maximal function by
$$\mathcal{M}f(x):=\sup_{\phi\in\mathcal{S}_0(\mathbb{R}^n)}\sup_{\substack{t>0\\ |y-x|<t}}|f*\phi_t(y)|,$$
where $\phi_t(x):=t^{-n}\phi(x/t)$ and
$$\mathcal{S}_0(\mathbb{R}^n):=\Big\{\phi\in\mathcal{S}(\mathbb{R}^n):\sup_{\substack{x\in\mathbb{R}^n\\ |\beta|\leq1}}(1+|x|)^{n+1}|\partial^\beta\phi(x)|\leq1\Big\}.$$
The Orlicz--Hardy space $H^\Phi(\mathbb{R}^n)$ is defined as the space of
all $f\in\mathcal{S}'(\mathbb{R}^n)$ such that $\mathcal{M}f\in L^\Phi
(\mathbb{R}^n)$, equipped with the quasi-norm
$$\|f\|_{H^\Phi(\mathbb{R}^n)}:=\|\mathcal{M}f\|_{L^\Phi(\mathbb{R}^n)}.$$
\item[(ii)] {\rm ($b$-adapted Orlicz--Hardy space)} Let $\Phi$ be a growth
function satisfying $0<i(\Phi)\leq1$, and let $b\in\mathrm{BMO}_{\Phi}(\mathbb{R}^n)$.
The $b$-adapted Orlicz--Hardy space $H_b^\Phi(\mathbb{R}^n)$ is defined as
the collection of all $f\in H^\Phi(\mathbb{R}^n)$ such that
$$[b,\mathcal M]f(x):=\mathcal M\big((b(x)-b(\cdot))f(\cdot)\big)(x)\in L^1(\mathbb{R}^n).$$
We equip $H_b^\Phi(\mathbb{R}^n)$ with the norm
$$\|f\|_{H_b^\Phi(\mathbb{R}^n)}:=\|f\|_{H^\Phi(\mathbb{R}^n)}\|b\|_{\mathrm{BMO}_{\Phi}(\mathbb{R}^n)}+\|[b,\mathcal{M}]f\|_{L^1(\mathbb{R}^n)}.$$
\end{itemize}
\end{definition}

Before presenting the definition of Orlicz--Hardy atom, let us recall one
definition.
\begin{definition}\label{def3.3}
{\rm (The spaces $L_B^q$)} Let $B\subset\mathbb{R}^n$ and $q\in[1,\infty]$.
We denote by $L_B^q$ the space of all measurable functions $f$ on $\mathbb{R}^n$
supported in $B$ such that
$$\|f\|_{L_B^q}:=
\begin{cases}
\displaystyle\Big(\frac{1}{|B|}\int_B|f(x)|^qdx\Big)^{1/q}<\infty, & 1\leq q<\infty,\\[2mm]
\displaystyle\operatorname*{ess\,sup}_{x\in B}|f(x)|<\infty, & q=\infty.
\end{cases}$$
\end{definition}

We now define Orlicz--Hardy atoms.

\begin{definition}\label{def3.4}
{\rm (Orlicz--Hardy atom)} Let $\Phi$ be a growth function with
critical lower type index $i(\Phi)$, and set
$m(\Phi):=[n(i(\Phi)^{-1}-1)]$. Let $q\in(1,\infty]$, and let $s$ be a
nonnegative integer satisfying $s\geq m(\Phi)$. A locally integrable
function $a$ on $\mathbb{R}^n$ is called a $(\Phi,q,s)$-atom associated
with a ball $B\subset\mathbb{R}^n$ if the following statements hold:
\begin{itemize}
\item[(i)] $\operatorname{supp}a\subset B$;
\item[(ii)] $\|a\|_{L_B^q}\leq\|\chi_B\|_{L^\Phi(\mathbb{R}^n)}^{-1}$;
\item [(iii)] $\int_{\mathbb{R}^n}x^\beta a(x)\,dx=0$ for every multi-index
$\beta\in\mathbb{N}^n$ with $|\beta|\leq s$.
\end{itemize}
\end{definition}
\begin{remark}\label{rem3.1}
By the assumptions  of Theorem \ref{thm1.2}, i.e., $n/(n+\alpha)<i(\Phi)\leq1$
with $0<\alpha\leq1$, we have $m(\Phi)=0$. Hence, the atoms used in the
proof of Theorem \ref{thm1.2} need to satisfy the cancellation condition
$\int_{\mathbb{R}^n}a(x)dx=0$.
\end{remark}

Now we introduce the following class of sublinear operators, which is
very useful in our proof.

\begin{definition}\label{def3.5}
Let $\Phi$ be a growth function with $0<i(\Phi)\leq1$. We say that a
sublinear operator $\mathfrak{T}$ belongs to $\mathcal{K}_{\Phi}$ if
the following conditions are satisfied:
\begin{itemize}
\item[(i)] $\mathfrak{T}$ is bounded from $H^1(\mathbb{R}^n)$ to
$L^1(\mathbb{R}^n)$;
\item[(ii)] $\mathfrak{T}$ is bounded from $L^1(\mathbb{R}^n)$ to
$L^{1,\infty}(\mathbb{R}^n)$;
\item[(iii)] for every $b\in\mathrm{BMO}_{\Phi}(\mathbb{R}^n)$ and
every $(\Phi,q,0)$-atom $a$ associated with a ball $B\subset\mathbb{R}^n$,
where $q\in(1,\infty)$,
$$\|(b-b_B)\mathfrak{T}a\|_{L^1(\mathbb{R}^n)}
\lesssim\|a\|_{L_B^q}\|\chi_B\|_{L^\Phi(\mathbb{R}^n)}\|b\|_{\mathrm{BMO}_{\Phi}(\mathbb{R}^n)}.$$
\end{itemize}
\end{definition}

\subsection{Proof of Theorem \ref{thm1.2}}
This subsection aims to prove Theorem \ref{thm1.2}.
Before presenting the proof of Theorem \ref{thm1.2}, let us
establish some preliminary lemmas. To obtain the endpoint estimates
for commutators associated with operators in $\mathcal{K}_{\Phi}$, we
recall the following auxiliary result based on the bilinear decomposition
of products of Orlicz--Hardy and Orlicz--BMO functions.

\begin{lemma}\label{lem3.1}{\rm (\cite{FL})}
Let $\Phi$ be a growth function satisfying condition (1.1) and
$$\frac{n}{n+1}<i(\Phi)\leq I(\Phi)<1.$$
Let $b\in\mathrm{BMO}_{\Phi}(\mathbb{R}^n)$.
Set
$$\Pi_4(f,b):=\sum_{I\in\mathcal{D}}\sum_{\lambda\in E}\langle f,\psi_I^\lambda\rangle\langle b,\psi_I^\lambda\rangle(\psi_I^\lambda)^2,$$
where $\mathcal{D}$ denotes the collection of dyadic cubes,
$E:=\{0,1\}^n\setminus\{(0,\ldots,0)\}$, and
$\{\psi_I^\lambda\}_{I\in\mathcal{D},\,\lambda\in E}$ is the wavelet family.
Then the following conclusions hold:
\begin{itemize}
\item[(i)] The operator $\Pi_4$ extends to a bounded bilinear operator
from $H^\Phi(\mathbb{R}^n)\times\mathrm{BMO}_{\Phi}(\mathbb{R}^n)$ to
$L^1(\mathbb{R}^n)$. In particular,
$$\|\Pi_4(f,b)\|_{L^1(\mathbb{R}^n)}\lesssim\|f\|_{H^\Phi(\mathbb{R}^n)}\|b\|_{\mathrm{BMO}_{\Phi}(\mathbb{R}^n)}.$$
\item[(ii)] Let $T\in K_\Phi$. For $f\in H^\Phi(\mathbb R^n)$, define
$T_bf(x):=T\big((b(x)-b(\cdot))f(\cdot)\big)(x).$ Then there exists a nonnegative operator
$$\mathcal{R}:H^\Phi(\mathbb{R}^n)\times\mathrm{BMO}_{\Phi}(\mathbb{R}^n)\longrightarrow L^1(\mathbb{R}^n)$$
such that, for every $f\in H^\Phi(\mathbb{R}^n)$,
$$|T_b f(x)|\leq |T(\Pi_4(f,b))(x)|+\mathcal{R}(f,b)(x)$$
for almost every $x\in\mathbb{R}^n$, and
$$\|\mathcal{R}(f,b)\|_{L^1(\mathbb{R}^n)}\lesssim\|f\|_{H^\Phi(\mathbb{R}^n)}\|b\|_{\mathrm{BMO}_{\Phi}(\mathbb{R}^n)}.$$
\item[(iii)] For $f\in H^\Phi(\mathbb{R}^n)$,
$$f\in H_b^\Phi(\mathbb{R}^n)\Longleftrightarrow\Pi_4(f,b)\in H^1(\mathbb{R}^n).$$
Moreover, whenever these equivalent conditions hold,
$$\|f\|_{H_b^\Phi(\mathbb{R}^n)}\simeq\|f\|_{H^\Phi(\mathbb{R}^n)}\|b\|_{\mathrm{BMO}_{\Phi}(\mathbb{R}^n)}+\|\Pi_4(f,b)\|_{H^1(\mathbb{R}^n)}.$$
\end{itemize}
\end{lemma}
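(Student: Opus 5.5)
The plan follows the scheme of Bonami--Grellier--Ky and Ky \cite{Ky2013}, transplanted to the Orlicz setting as in \cite{FL}. We first obtain a bilinear decomposition of the product $f\,b$ for atoms, then control each piece, and finally pass from atoms to general $f\in H^\Phi(\mathbb{R}^n)$. We work with a compactly supported, sufficiently regular Daubechies-type wavelet basis $\{\psi_I^\lambda\}$ with father wavelet $\phi_I$. The regularity is chosen large compared with $m(\Phi)$; condition (1.1) will enter exactly where the number of moments and the smoothness of the wavelets must beat the exponents $n(1/i(\Phi)-1)$ and $1/I(\Phi)$. Writing $b=\sum\langle b,\psi_I^\lambda\rangle\psi_I^\lambda$ and similarly for $f$, we split
\[
fb=\Pi_1(f,b)+\Pi_2(f,b)+\Pi_3(f,b)+\Pi_4(f,b).
\]
Here $\Pi_1,\Pi_2$ are the paraproducts in which $f$, respectively $b$, carries the low-frequency factor $\phi_I$; $\Pi_3$ collects the mixed terms $\langle f,\psi_I^\lambda\rangle\langle b,\psi_I^{\lambda'}\rangle\psi_I^\lambda\psi_I^{\lambda'}$ with $\lambda\ne\lambda'$; and $\Pi_4$ is the diagonal term in the statement.

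For (i), since $\|(\psi_I^\lambda)^2\|_{L^1}=1$, it suffices to prove the sequence estimate
\[
\sum_{I,\lambda}|\langle f,\psi_I^\lambda\rangle||\langle b,\psi_I^\lambda\rangle|\lesssim \|f\|_{H^\Phi(\mathbb{R}^n)}\|b\|_{\mathrm{BMO}_\Phi(\mathbb{R}^n)}.
\]
This rests on two ingredients.
\begin{itemize}
\item On the $b$ side, a Carleson-type characterization: $\sum_{I\subset J}|\langle b,\psi_I^\lambda\rangle|^2\lesssim \|b\|^2_{\mathrm{BMO}_\Phi}\|\chi_J\|^2_{L^\Phi}/|J|$. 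This follows from the definition of $\mathrm{BMO}_\Phi$ applied to $b-b_J$, using the vanishing moment of $\psi$ and compact support.
\item On the $f$ side, an atomic decomposition of the wavelet coefficients into tent-space atoms: the coefficient sequence of $f$ splits into pieces supported on dyadic tents $\{I\subset J\}$, each with $\ell^2$ mass at most $|J|^{1/2}\|\chi_J\|_{L^\Phi}^{-1}$, and with $\Phi$-summable weights.
\end{itemize}
Combining them by Cauchy--Schwarz on each tent yields $\sum_j|\mu_j|$. The lower type $i(\Phi)>n/(n+1)$ together with $I(\Phi)<1$ (so $\Phi$ is ``concave with room'') should give $\sum_j|\mu_j|\lesssim\|f\|_{H^\Phi}$.

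For (ii), take a $(\Phi,q,0)$-atom $a$ on $B$. Since $\langle 1,\psi_I^\lambda\rangle=0$, we have $\Pi_4(a,b)=\Pi_4(a,b-b_B)$, and
\[
T_ba=(b-b_B)Ta-T\big(\Pi_1+\Pi_2+\Pi_3\big)(a,b-b_B)-T\Pi_4(a,b).
\]
We define $\mathcal R$ as the modulus of the first two groups of terms and bound each in $L^1$ by $\|b\|_{\mathrm{BMO}_\Phi}$ (with the atom's normalization factor).
\begin{itemize}
\item The first term is controlled by property (iii) of $\mathcal K_\Phi$.
\item For the middle terms, we show that $\Pi_1,\Pi_2,\Pi_3$ map an atom times $b-b_B$ into $H^1(\mathbb{R}^n)$ with norm $\lesssim\|b\|_{\mathrm{BMO}_\Phi}$, and then use $T:H^1\to L^1$. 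Here $\Pi_1,\Pi_2$ are Calder\'on--Zygmund-type paraproducts for which $H^1$ boundedness follows from the Carleson estimate above plus localization of $a$. For $\Pi_3$, the factor $\psi^\lambda\psi^{\lambda'}$ has integral zero.
\end{itemize}
The bound for general $f$ then follows by summing over an atomic decomposition $f=\sum\lambda_ja_j$.

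I expect this summation step to be the main obstacle. We need $\sum_j|\lambda_j|\,\|\chi_{B_j}\|_{L^\Phi}\cdot(\text{atom size})\lesssim\|f\|_{H^\Phi}$ in $L^1$. Since atoms are normalized by $\|\chi_B\|_{L^\Phi}^{-1}$ rather than $|B|^{-1}$, this is not automatic. I would use $I(\Phi)<1$ to compare $\sum_j\|\chi_{B_j}\|_{L^\Phi}\cdot$(normalized $L^1$ mass) with $\sum_j|B_j|\Phi(|\lambda_j|/\|\chi_{B_j}\|_{L^\Phi})$ via the upper type $<1$. Failing that, I would prove the bilinear estimates directly on the wavelet side, as in (i), and avoid atoms altogether.

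For (iii), apply (ii) with $T$ replaced by the grand maximal operator $\mathcal M$. This operator is in $\mathcal K_\Phi$: it is bounded $H^1\to L^1$ and $L^1\to L^{1,\infty}$, and its commutator estimate follows from the size and decay of $\mathcal Ma$ off $2B$ together with the $\mathrm{BMO}_\Phi$ definition. This gives
\[
\big|[b,\mathcal M]f-\mathcal M(\Pi_4(f,b))\big|\le\mathcal R(f,b)\in L^1 .
\]
Since the $H^1$ norm is $\|\mathcal M(\cdot)\|_{L^1}$, it follows that $[b,\mathcal M]f\in L^1$ if and only if $\Pi_4(f,b)\in H^1$, with the stated two-sided norm equivalence.
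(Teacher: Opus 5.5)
The paper gives no proof of this lemma; it imports it from \cite{FL}. Your outline is the Bonami--Grellier--Ky/Ky \cite{Ky2013} scheme that \cite{FL} transplants: the wavelet splitting $fb=\Pi_1+\Pi_2+\Pi_3+\Pi_4$, a Carleson/tent estimate for (i), atoms together with $T:H^1\to L^1$ for the remainder in (ii), and $T=\mathcal M$ for (iii). So the architecture is right. There is, however, a genuine gap in how you pass from atoms to general $f$, and it carries over into (iii).

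Operators in $\mathcal K_\Phi$ are only sublinear, and so is $\mathcal M$. Your identity $T_ba=(b-b_B)Ta-T(\Pi_1+\Pi_2+\Pi_3)(a,b-b_B)-T\Pi_4(a,b)$ therefore does not hold. You also cannot obtain the general inequality by summing the atomic ones: that produces $\sum_j|\lambda_j|\,|T\Pi_4(a_j,b)|$, which is not dominated by $|T\Pi_4(f,b)|$, since sublinearity runs the other way.

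In (iii) you use $\bigl|[b,\mathcal M]f-\mathcal M(\Pi_4(f,b))\bigr|\le\mathcal R(f,b)$, but (ii) as stated only gives $|T_bf|\le|T\Pi_4(f,b)|+\mathcal R(f,b)$. The reverse bound $\mathcal M(\Pi_4(f,b))\le|[b,\mathcal M]f|+\mathcal R(f,b)$ is exactly what the implication $f\in H_b^\Phi\Rightarrow\Pi_4(f,b)\in H^1$ needs.

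The repair (Ky's device) is to decompose the argument rather than the output. For $f=\sum_j\lambda_ja_j$ and fixed $x$, use $\Pi_4(a_j,b-b_{B_j})=\Pi_4(a_j,b)$ and (i) to write
\[
(b(x)-b)f=h_x-\Pi_4(f,b),\qquad h_x:=\sum_j\lambda_j\Big[(b(x)-b_{B_j})a_j-(\Pi_1+\Pi_2+\Pi_3)(a_j,b-b_{B_j})\Big].
\]
One application of sublinearity then gives both $|T_bf(x)|\le|T\Pi_4(f,b)(x)|+|Th_x(x)|$ and $|T\Pi_4(f,b)(x)|\le|T_bf(x)|+|Th_x(x)|$, where
\[
|Th_x(x)|\le\mathcal R(f,b)(x):=\sum_j|\lambda_j|\Big(|b(x)-b_{B_j}|\,|Ta_j(x)|+\big|T\big((\Pi_1+\Pi_2+\Pi_3)(a_j,b-b_{B_j})\big)(x)\big|\Big).
\]
Infinite sums need the usual care, for example via finite atomic decompositions, or via countable sublinearity of $\mathcal M$ along $\mathcal S'$-convergent series.

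Two further corrections.

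\emph{The summation step is not an obstacle.} Since $|B|\,\Phi(\|\chi_B\|_{L^\Phi}^{-1})\simeq1$, the upper type $1$ and lower type $\mathfrak p$ of $\Phi$ give $|B_j|\,\Phi(|\lambda_j|/\|\chi_{B_j}\|_{L^\Phi})\gtrsim\min\{|\lambda_j|,|\lambda_j|^{\mathfrak p}\}$. By homogeneity this yields $\sum_j|\lambda_j|\lesssim\|f\|_{H^\Phi}$ for a suitable decomposition, with no use of $I(\Phi)<1$. This is the Orlicz analogue of $\sum_j|\lambda_j|\le(\sum_j|\lambda_j|^p)^{1/p}$.

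\emph{Condition (1.1) plays no role here.} Under $n/(n+1)<i(\Phi)\le I(\Phi)<1$ it holds automatically, because the integer part vanishes and $1/I(\Phi)>1$. What $i(\Phi)>n/(n+1)$ actually buys is convergence in three places: the large-scale parts of $\Pi_1(a,b-b_B)$ and $\Pi_2(a,b-b_B)$, and the far-field estimate behind $\mathcal M\in\mathcal K_\Phi$. In each, the gain $2^{-k}$ from $\int a=0$ at scale $2^kr$ must beat the growth $2^{kn(1/\mathfrak p-1)}$ (with $\mathfrak p<i(\Phi)$) of the averages of $|b-b_B|$ over $2^kB$, measured in units of $\|\chi_B\|_{L^\Phi}|B|^{-1}\|b\|_{\mathrm{BMO}_\Phi}$.

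This localization is essential, because $b-b_B\notin\mathrm{BMO}$ in general, so global $H^1$ paraproduct bounds are unavailable. Likewise, your Carleson bound needs a local John--Nirenberg step. That step works because $\|\chi_B\|_{L^\Phi}/|B|$ is almost increasing in $|B|$.
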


In order to prove Theorem \ref{thm1.2}, we need to verify that
the intrinsic Littlewood--Paley $g$-function and the intrinsic $g_{\lambda,
\alpha}^{*}$ belong to the class $\mathcal{K}_{\Phi}$.

\begin{lemma}\label{lem3.2}
Let $0<\alpha\leq1$, and let $\Phi$ be a growth function satisfying
$n/(n+\alpha)<i(\Phi)\leq1$. Then $g_{\alpha}\in\mathcal{K}_{\Phi}$.
\end{lemma}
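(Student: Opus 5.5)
We verify the three conditions of Definition \ref{def3.5} in turn. Conditions (i) and (ii) are essentially known. For (i), $g_\alpha$ is bounded from $H^1(\mathbb{R}^n)$ to $L^1(\mathbb{R}^n)$; this is the case $\varphi(x,t)=t$ of \cite[Theorem~1.6]{LY}, or it follows directly from the atomic decomposition. Indeed, for an $H^1$-atom $a$ supported in $B=B(x_0,r)$, the $L^2$-boundedness of $g_\alpha$ (\cite{W}) and H\"older's inequality control $\int_{2B}g_\alpha(a)$. Off $2B$ we use the pointwise bound $g_\alpha(a)(x)\lesssim r^{n+\alpha}|x-x_0|^{-n-\alpha}\|a\|_{L^1}/r^n$, which is obtained exactly as in the far-field part of Lemma \ref{lem2.7}, and this bound is integrable over $(2B)^c$. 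For (ii), the weak $(1,1)$ bound for $g_\alpha$ comes from standard vector-valued Calder\'on--Zygmund theory. We view $f\mapsto\{f*\phi_t\}_{\phi\in\mathcal C_\alpha,t>0}$ as a singular integral with values in the Banach space of functions $F(\phi,t)$ normed by $(\int_0^\infty\sup_\phi|F(\phi,t)|^2dt/t)^{1/2}$. Its $L^2$-boundedness is Wilson's result, and the kernel $K(x)=\{\phi_t(x)\}$ satisfies the H\"ormander regularity condition because of the $\alpha$-H\"older bound on $\phi$ and the support condition. Alternatively, one may simply cite the known weak type $(1,1)$ of $g_\alpha$ (\cite{W,Lerner}).

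The substantive part is (iii). Fix $q\in(1,\infty)$, a $(\Phi,q,0)$-atom $a$ on $B=B(x_0,r)$, and $b\in\mathrm{BMO}_\Phi(\mathbb{R}^n)$. We split $\int|b-b_B|g_\alpha(a)$ into the integral over $2B$ and the integral over $(2B)^c$. On $2B$ we apply H\"older's inequality with exponents $q'$ and $q$, the $L^q$-boundedness of $g_\alpha$, and John--Nirenberg. This gives
$$\int_{2B}|b-b_B|g_\alpha(a)\lesssim |B|\Big(\frac1{|2B|}\int_{2B}|b-b_B|^{q'}\Big)^{1/q'}\|a\|_{L^q_B}\lesssim |B|\,\|b\|_{\mathrm{BMO}}\|a\|_{L^q_B}.$$
Next we need $|B|\,\|b\|_{\mathrm{BMO}}\lesssim\|\chi_B\|_{L^\Phi}\|b\|_{\mathrm{BMO}_\Phi}$. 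This follows from the definition of $\mathrm{BMO}_\Phi$ in the form
$$\frac1{|B|}\int_B|b-b_B|\le \frac{\|\chi_B\|_{L^\Phi}}{|B|}\|b\|_{\mathrm{BMO}_\Phi},$$
together with the following point. We cannot replace $\|b\|_{\mathrm{BMO}}$ uniformly, so instead we apply John--Nirenberg locally. By the definition, the oscillation of $b$ on any ball $B'$ is controlled by $\|\chi_{B'}\|_{L^\Phi}/|B'|\cdot\|b\|_{\mathrm{BMO}_\Phi}$. Since $\|\chi_{B'}\|_{L^\Phi}=1/\Phi^{-1}(1/|B'|)$ and $i(\Phi)<1$, the quantity $\|\chi_{B'}\|_{L^\Phi}/|B'|$ is essentially nondecreasing in $|B'|$. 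Hence for balls $B'\subset 2B$ the mean oscillation is $\lesssim\|\chi_{B}\|_{L^\Phi}|B|^{-1}\|b\|_{\mathrm{BMO}_\Phi}$. The local John--Nirenberg inequality on $2B$ then gives
$$\Big(\frac1{|2B|}\int_{2B}|b-b_B|^{q'}\Big)^{1/q'}\lesssim \frac{\|\chi_B\|_{L^\Phi}}{|B|}\|b\|_{\mathrm{BMO}_\Phi},$$
which is the required estimate.

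On $(2B)^c$ we use the cancellation of $a$ and the pointwise bound from Lemma \ref{lem2.7}. For $x\in 2^{j+1}B\setminus2^jB$,
$$g_\alpha(a)(x)\lesssim 2^{-j(n+\alpha)}r^{-n}\|a\|_{L^1}\le 2^{-j(n+\alpha)}\|a\|_{L^q_B}.$$
Therefore
$$\int_{(2B)^c}|b-b_B|g_\alpha(a)\lesssim\|a\|_{L^q_B}\sum_{j\ge1}2^{-j(n+\alpha)}\int_{2^{j+1}B}|b-b_B|.$$
Telescoping through the balls $2^kB$ and using the $\mathrm{BMO}_\Phi$ condition on each of them, we get
$$\int_{2^{j+1}B}|b-b_B|\lesssim\|b\|_{\mathrm{BMO}_\Phi}\sum_{k\le j+1}2^{(j-k)n}\|\chi_{2^kB}\|_{L^\Phi}.$$
The upper type bound gives $\|\chi_{2^kB}\|_{L^\Phi}\lesssim 2^{kn/i'}\|\chi_B\|_{L^\Phi}$ for any $i'<i(\Phi)$, via $\Phi^{-1}$ and the lower type. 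The whole sum is then bounded by $\|\chi_B\|_{L^\Phi}\|b\|_{\mathrm{BMO}_\Phi}\sum_j 2^{-j\alpha}2^{jn(1/i'-1)}$, up to a polynomial factor in $j$. This series converges precisely when $n(1/i'-1)<\alpha$, that is, when $i'>n/(n+\alpha)$. We can choose such an $i'$ below $i(\Phi)$ because $i(\Phi)>n/(n+\alpha)$.

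The main obstacle is this bookkeeping. We must compare $\|\chi_{2^kB}\|_{L^\Phi}$ with $\|\chi_B\|_{L^\Phi}$ through the type indices, and we must justify the local John--Nirenberg step with $\mathrm{BMO}_\Phi$ in place of $\mathrm{BMO}$. The latter is where the monotonicity of $t\mapsto\|\chi_{B(t)}\|_{L^\Phi}/|B(t)|$ is needed. If that monotonicity is too delicate, the fallback is to run the telescoping argument on dyadic subballs of $2B$ as well.
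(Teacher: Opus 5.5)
Your proposal is correct and follows the paper's proof: the same split into $2B$ and $(2B)^c$, H\"older plus $L^q$-boundedness of $g_\alpha$ on $2B$, the far-field decay $g_\alpha(a)\lesssim 2^{-j(n+\alpha)}\|a\|_{L^q_B}$, and the growth bound $\|\chi_{2^jB}\|_{L^\Phi}\lesssim 2^{jn/\mathfrak p}\|\chi_B\|_{L^\Phi}$ with $\mathfrak p>n/(n+\alpha)$; your local John--Nirenberg step for $\mathrm{BMO}_\Phi$ and the telescoping of $b_{2^{j+1}B}-b_B$ fill in details the paper leaves implicit. One small correction: the essential monotonicity of $\|\chi_{B'}\|_{L^\Phi}/|B'|$ in $|B'|$ comes from the upper type $1$ property of $\Phi$ (so that $\Phi(t)/t$ is essentially nonincreasing), not from $i(\Phi)<1$, which is not assumed in this lemma.
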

\begin{proof}
It was proved in \cite{W,HL} that $g_{\alpha}$ is bounded from
$H^1(\mathbb{R}^n)$ to $L^1(\mathbb{R}^n)$ and from $L^1(\mathbb{R}^n)$
to $L^{1,\infty}(\mathbb{R}^n)$. Therefore, we only verify condition
{\rm (iii)} in the definition of $\mathcal{K}_{\Phi}$.

Let $b\in\mathrm{BMO}_{\Phi}(\mathbb{R}^n)$ and let $a$ be a $(\Phi,q,
0)$-atom supported in a ball $B:=B(x_0,r)$, where $q\in(1,\infty)$.
Write
$$\begin{array}{ll}
&\|(b-b_B)g_{\alpha}(a)\|_{L^1(\mathbb{R}^n)}=\displaystyle\int_{2B}|b(x)-b_B|\,g_{\alpha}(a)(x)dx
+\int_{(2B)^c}|b(x)-b_B|\,g_{\alpha}(a)(x)dx\\
&\qquad\qquad\qquad\qquad\qquad=:I_1+I_2.
\end{array}$$
For $I_1$, by H\"older's inequality and the $L^q(\mathbb{R}^n)$-boundedness
of $g_{\alpha}$, one gets
$$\begin{array}{ll}
&I_1\leq\displaystyle\Big(\int_{2B}|b(x)-b_B|^{q'}dx\Big)^{1/q'}\Big(\int_{2B}|g_{\alpha}(a)(x)|^qdx\Big)^{1/q}\\
&\quad\lesssim |B|^{1/q'-1}\|\chi_B\|_{L^\Phi(\mathbb{R}^n)}\|b\|_{\mathrm{BMO}_{\Phi}(\mathbb{R}^n)}|B|^{1/q}\|a\|_{L_B^q}\\
&\quad=\|a\|_{L_B^q}\|\chi_B\|_{L^\Phi(\mathbb{R}^n)}\|b\|_{\mathrm{BMO}_{\Phi}(\mathbb{R}^n)}.
\end{array}$$

We next estimate $I_2$. Arguing as in the proof of Lemma~\ref{lem2.7},
and using $\int_B|a(y)|\,dy\lesssim |B|\|a\|_{L_B^q}$, we obtain
$$g_{\alpha}(a)(x)\lesssim 2^{-j(n+\alpha)}\|a\|_{L_B^q},\qquad x\in 2^{j+1}B\setminus 2^jB,\quad j\in\mathbb{N}\setminus\{0\}.$$
Choose $\mathfrak{p}$ such that $\Phi$ is of lower type $\mathfrak{p}$ and
$n/(n+\alpha)<\mathfrak{p}<i(\Phi)$. Then
$$\begin{array}{ll}
&I_2\leq\displaystyle\sum\limits_{j=1}^{\infty}\int_{2^{j+1}B\setminus2^jB}|b(x)-b_B|\,g_{\alpha}(a)(x)dx\\
&\quad\lesssim\displaystyle\|a\|_{L_B^q}\sum_{j=1}^{\infty}2^{-j(n+\alpha)}\int_{2^{j+1}B}|b(x)-b_B|dx\\
&\quad\lesssim\displaystyle\|a\|_{L_B^q}\|\chi_B\|_{L^\Phi(\mathbb{R}^n)}\|b\|_{\mathrm{BMO}_{\Phi}(\mathbb{R}^n)}\sum\limits_{j=1}^{\infty}2^{-j(n+\alpha)}2^{(j+1)n/\mathfrak{p}}\\
&\quad\lesssim \|a\|_{L_B^q}\|\chi_B\|_{L^\Phi(\mathbb{R}^n)}\|b\|_{\mathrm{BMO}_{\Phi}(\mathbb{R}^n)},
\end{array}$$
since $\mathfrak{p}>n/(n+\alpha)$. In view of the above estimates for
$I_1$ and $I_2$, we obtain
$$\|(b-b_B)g_{\alpha}(a)\|_{L^1(\mathbb{R}^n)}\lesssim\|a\|_{L_B^q}\|\chi_B\|_{L^\Phi(\mathbb{R}^n)}
\|b\|_{\mathrm{BMO}_{\Phi}(\mathbb{R}^n)}.$$
Thus, $g_{\alpha}$ satisfies condition {\rm (iii)} in the definition
of $\mathcal{K}_{\Phi}$. Together with its boundedness from $H^1(\mathbb{R}^n)$
to $L^1(\mathbb{R}^n)$ and its weak type $(1,1)$ boundedness, we conclude that
$g_{\alpha}\in\mathcal{K}_{\Phi}$. This completes the proof.
\end{proof}
\begin{lemma}\label{lem3.3}
Let $0<\alpha\leq 1$, $2+2\alpha/n<\lambda<\infty$, and let $\Phi$ be a growth
function satisfying $\frac{n}{n+\alpha}<i(\Phi)\leq I(\Phi)<1$. Then
the intrinsic $g_{\lambda,\alpha}^*$-function $g_{\lambda,\alpha}^*
\in\mathcal{K}_{\Phi}$.
\end{lemma}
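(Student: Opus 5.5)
We follow the scheme of Lemma \ref{lem3.2} and check the three conditions in the definition of $\mathcal{K}_\Phi$ one at a time.

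\emph{Conditions (i) and (ii).} For $\lambda>2+2\alpha/n$ (in particular $\lambda>2$), the boundedness of $g_{\lambda,\alpha}^*$ from $H^1(\mathbb{R}^n)$ to $L^1(\mathbb{R}^n)$ and from $L^1(\mathbb{R}^n)$ to $L^{1,\infty}(\mathbb{R}^n)$ is known from \cite{W,HL}. We simply cite it, just as was done for $g_\alpha$.

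\emph{Condition (iii).} This is the real work. Fix $b\in\mathrm{BMO}_\Phi(\mathbb{R}^n)$ and a $(\Phi,q,0)$-atom $a$ supported in $B=B(x_0,r)$ with $q\in(1,\infty)$. Split
$$\|(b-b_B)g_{\lambda,\alpha}^*(a)\|_{L^1(\mathbb{R}^n)}=\int_{8B}+\int_{(8B)^c}=:J_1+J_2.$$

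For $J_1$, apply Hölder's inequality with exponents $q'$ and $q$, together with the $L^q(\mathbb{R}^n)$-boundedness of $g_{\lambda,\alpha}^*$ from \cite{W}. Then use the John--Nirenberg estimate
$$\Big(\frac1{|8B|}\int_{8B}|b-b_{8B}|^{q'}\Big)^{1/q'}\lesssim \|b\|_{\mathrm{BMO}}$$
and the bound $|b_{8B}-b_B|\lesssim\|b\|_{\mathrm{BMO}}$. Finally convert BMO to $\mathrm{BMO}_\Phi$ via
$$\|b\|_{\mathrm{BMO}}\le \sup_{B'}\frac{\|\chi_{B'}\|_{L^\Phi}}{|B'|}\cdot\frac{1}{\|\chi_{B'}\|_{L^\Phi}}\int_{B'}|b-b_{B'}|.$$
The normalization must be tracked at the ball $8B$ itself, exactly as in $I_1$ of Lemma \ref{lem3.2}. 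The relevant quantity is
$$\frac{1}{|8B|}\int_{8B}|b-b_{8B}|\le |8B|^{-1}\|\chi_{8B}\|_{L^\Phi}\|b\|_{\mathrm{BMO}_\Phi},$$
and the lower type of $\Phi$ gives $\|\chi_{8B}\|_{L^\Phi}\lesssim\|\chi_B\|_{L^\Phi}$. This yields
$$J_1\lesssim\|a\|_{L^q_B}\|\chi_B\|_{L^\Phi}\|b\|_{\mathrm{BMO}_\Phi}.$$
The John--Nirenberg step raising the exponent from $1$ to $q'$ costs only a BMO norm at the scale of $8B$, so the same estimate applies.

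For $J_2$, repeat the pointwise decay argument (2.1) from the proof of Lemma \ref{lem2.8}. Replace $\|\chi_B\|_{L^\varphi}^{-1}$ by $\|a\|_{L^q_B}$, using $\int_B|a|\le |B|\|a\|_{L^q_B}$ and the $L^\infty$-free bound
$$A_\alpha(a)(y,t)\lesssim \min\{t^{-n}|B|,\,r^{n+\alpha}t^{-n-\alpha}\}\|a\|_{L^q_B}.$$
Splitting into $I_1,I_2,I_3$ as there (with $t<r$, with $r\le t<|x-x_0|/4$, and with $t\ge|x-x_0|/4$), and using $\lambda>2+2\alpha/n$, gives
$$g_{\lambda,\alpha}^*(a)(x)\lesssim 2^{-j(n+\alpha)}\|a\|_{L^q_B}\quad\text{on }2^{j+4}B\setminus2^{j+3}B.$$
In the range $t<r$, the factor $(t/r)^{2n}$ coming from the $\min$ only helps.

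Then sum over $j$ as in $I_2$ of Lemma \ref{lem3.2}. Two bounds are needed:
$$\int_{2^{j+4}B}|b-b_B|\lesssim \|\chi_{2^{j+4}B}\|_{L^\Phi}\|b\|_{\mathrm{BMO}_\Phi}+|2^{j+4}B|\,|b_{2^{j+4}B}-b_B|,$$
and, by lower type $\mathfrak p$ with $n/(n+\alpha)<\mathfrak p<i(\Phi)$,
$$\|\chi_{2^kB}\|_{L^\Phi}\lesssim 2^{kn/\mathfrak p}\|\chi_B\|_{L^\Phi}.$$
The telescoping term $|b_{2^kB}-b_B|$ is bounded by $\sum_{i\le k}\|\chi_{2^iB}\|_{L^\Phi}|2^iB|^{-1}\|b\|_{\mathrm{BMO}_\Phi}$. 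The exponents $n/\mathfrak p-n$ are positive, so this geometric sum is dominated by its last term, and we get
$$\int_{2^{k}B}|b-b_B|\lesssim 2^{kn/\mathfrak p}\|\chi_B\|_{L^\Phi}\|b\|_{\mathrm{BMO}_\Phi}.$$
The series $\sum_j 2^{-j(n+\alpha)+jn/\mathfrak p}$ converges because $\mathfrak p>n/(n+\alpha)$.

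\emph{Main obstacle.} The delicate part is the pointwise decay for $g^*_{\lambda,\alpha}(a)$ far from $B$. The $I_3$ range relies on $\int(t/(t+|x-y|))^{\lambda n}dy\lesssim t^n$, which needs $\lambda>1$. The $I_2$ range needs $\lambda n>2(n+\alpha)$, i.e. $\lambda>2+2\alpha/n$, to make the $t$-integral dominated by its upper endpoint. That bound has already been done in Lemma \ref{lem2.8}, so I would invoke it essentially verbatim with the new normalization.
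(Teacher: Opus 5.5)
Your overall plan matches the paper's. You cite (i)--(ii), split at $8B$, handle the local part by H\"older as in Lemma~\ref{lem3.2}, and import the far-field decay of Lemma~\ref{lem2.8} with $\|\chi_B\|_{L^\Phi}^{-1}$ replaced by $\|a\|_{L^q_B}$. Your annular summation with the telescoping bound for $|b_{2^kB}-b_B|$ is fine.

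\textbf{The gap: the small-scale piece $I_1$ ($t<r$).} Lemma~\ref{lem2.8} used $A_\alpha(a)\lesssim\|a\|_{L^\infty}$ in this range, which a $(\Phi,q,0)$-atom with $q<\infty$ does not provide. Your substitute $A_\alpha(a)(y,t)\lesssim t^{-n}|B|\,\|a\|_{L^q_B}$ equals $(r/t)^n\|a\|_{L^q_B}$ up to constants for $t<r$. After squaring, the factor is $(r/t)^{2n}$, not $(t/r)^{2n}$, and it hurts:
\[
I_1\lesssim \|a\|_{L^q_B}^2\,\frac{r^{3n}}{|x-x_0|^{\lambda n}}\int_0^r t^{\lambda n-3n-1}\,dt .
\]
This bound is infinite unless $\lambda>3$. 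Since $2+2\alpha/n\le 3$ whenever $\alpha\le n/2$ (always when $n\ge2$), your decay bound $g^*_{\lambda,\alpha}(a)\lesssim 2^{-j(n+\alpha)}\|a\|_{L^q_B}$ is not established for $\lambda\in(2+2\alpha/n,3]$. This, not $I_2$, is the real obstacle in passing from $L^\infty$ to $L^q$ atoms. (The paper's one-line appeal to the far-field estimate of Section~\ref{S2} passes over the same point, since that estimate was proved for $L^\infty$ atoms.)

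\textbf{The repair: integrate in $y$ before estimating.} Since $A_\alpha(a)(y,t)\lesssim t^{-n}\int_{|y-z|<t}|a(z)|\,dz$, Young's inequality gives $\|A_\alpha(a)(\cdot,t)\|_{L^2}\lesssim t^{n/2-n/q}\|a\|_{L^q}$ for $1<q\le 2$. The case $q>2$ reduces to $q=2$ because $\|a\|_{L^2_B}\le\|a\|_{L^q_B}$. As $|x-y|\gtrsim|x-x_0|$ on the region of $I_1$,
\[
I_1\lesssim\frac{1}{|x-x_0|^{\lambda n}}\int_0^r t^{\lambda n-n-1}\|A_\alpha(a)(\cdot,t)\|_{L^2}^2\,dt\lesssim\frac{\|a\|_{L^q}^2}{|x-x_0|^{\lambda n}}\int_0^r t^{\lambda n-2n/q-1}\,dt\simeq\Big(\frac{r}{|x-x_0|}\Big)^{\lambda n}\|a\|_{L^q_B}^2 .
\]
This uses only $\lambda>2>2/q$, and then $\lambda n>2(n+\alpha)$ gives the desired $2^{-j(n+\alpha)}$ decay.

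\textbf{Condition (ii).} The weak type $(1,1)$ bound is not available off the shelf for $\lambda\le 3+2\alpha/n$. The paper notes that the known Calder\'on--Zygmund proof assumes $\lambda>3+2\alpha/n$ and must be redone with a direct far-field estimate. There the bad pieces are merely $L^1$ with mean zero, so the same small-$t$ issue arises. The $q=1$ version of the estimate above (exponent $\lambda n-2n-1$, integrable since $\lambda>2$) is what is needed.

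\textbf{Local part.} Here $\mathrm{BMO}_\Phi\not\subset\mathrm{BMO}$; for $\Phi(t)=t^p$ it is a Lipschitz space. So your displayed global conversion to $\|b\|_{\mathrm{BMO}}$ is vacuous. The localized John--Nirenberg step on $8B$ is justified instead by
\[
|B'|^{-1}\|\chi_{B'}\|_{L^\Phi}\lesssim|8B|^{-1}\|\chi_{8B}\|_{L^\Phi}\quad\text{for all balls } B'\subset 8B,
\]
which follows from the upper type $1$ of $\Phi$.
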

\begin{proof}
The proof is similar to that of Lemma \ref{lem3.2}. Under the assumption
$\lambda>2+\frac{2\alpha}{n}$, it was shown in \cite{LY,W} that the
intrinsic Littlewood--Paley $g_{\lambda,\alpha}^{*}$-function is bounded
from $H^1(\mathbb{R}^n)$ to $L^1(\mathbb{R}^n)$ \cite{LY} and is bounded
on $L^q(\mathbb{R}^n)$ for every $q\in(1,\infty)$. It is worth noting that
the Calder\'on--Zygmund decomposition argument used in \cite{Wang2013} for
the weak type $(1,1)$ estimate, originally carried out under the condition
$\lambda>3+\frac{2\alpha}{n}$, can be modified, by using the direct far-field
argument developed in Section \ref{S2}, to yield the same estimate whenever
$\lambda>2+\frac{2\alpha}{n}$. Thus, $g_{\lambda,\alpha}^{*}$ satisfies
conditions {\rm (i)} and {\rm (ii)} in the definition of $\mathcal{K}_{\Phi}$.

It remains only to verify condition {\rm (iii)}. Let $b\in\mathrm{BMO}_{\Phi}
(\mathbb{R}^n)$ and let $a$ be a $(\Phi,q,0)$-atom supported in a ball $B$.
Repeating the local estimate in the proof of Lemma \ref{lem3.2} and using the
far-field estimate for $g_{\lambda,\alpha}^{*}(a)$ established in Section
\ref{S2}, we obtain
$$\|(b-b_B)g_{\lambda,\alpha}^{*}(a)\|_{L^1(\mathbb{R}^n)}\lesssim
\|a\|_{L_B^q}\|\chi_B\|_{L^\Phi(\mathbb{R}^n)}\|b\|_{\mathrm{BMO}_{\Phi}(\mathbb{R}^n)}.$$
Hence $g_{\lambda,\alpha}^{*}$ satisfies condition {\rm (iii)}, and therefore
$g_{\lambda,\alpha}^{*}\in\mathcal{K}_{\Phi}$.
\end{proof}

Now we provide a proof of Theorem \ref{thm1.2}.

\begin{proof}[\bf Proof of Theorem \ref{thm1.2}]
By Lemma \ref{lem3.2} we see that $g_\alpha\in\mathcal{K}_\Phi$. So Lemma
\ref{lem3.1} is applicable with $T=g_\alpha$.

{\it Proof of part (i)}. Let $f\in H_b^\Phi(\mathbb{R}^n)$. By Lemma \ref{lem3.1}(ii),
$$|g_{\alpha,b}(f)(x)|\lesssim g_\alpha(\Pi_4(f,b))(x)+\mathcal{R}(f,b)(x)$$
for almost every $x\in\mathbb{R}^n$. Moreover, Lemma~\ref{lem3.1}(iii) gives
$\Pi_4(f,b)\in H^1(\mathbb{R}^n)$ and
$$\|f\|_{H_b^\Phi(\mathbb{R}^n)}\sim \|f\|_{H^\Phi(\mathbb{R}^n)}\|b\|_{\mathrm{BMO}_{\Phi}(\mathbb{R}^n)}+\|\Pi_4(f,b)\|_{H^1(\mathbb{R}^n)}.$$
In view of the boundedness of $g_\alpha$ from $H^1(\mathbb{R}^n)$ to
$L^1(\mathbb{R}^n)$ and Lemma \ref{lem3.1}(ii), one gets
$$\begin{array}{ll}
&\|g_{\alpha,b}(f)\|_{L^1(\mathbb{R}^n)}\lesssim \|g_\alpha(\Pi_4(f,b))\|_{L^1(\mathbb{R}^n)}+\|\mathcal{R}(f,b)\|_{L^1(\mathbb{R}^n)}\\
&\qquad\qquad\qquad\quad\lesssim \|\Pi_4(f,b)\|_{H^1(\mathbb{R}^n)}+\|f\|_{H^\Phi(\mathbb{R}^n)}\|b\|_{\mathrm{BMO}_{\Phi}(\mathbb{R}^n)}\\
&\qquad\qquad\qquad\quad\lesssim \|f\|_{H_b^\Phi(\mathbb{R}^n)}.
\end{array}$$
This shows that $g_{\alpha,b}$ is bounded from $H_b^\Phi(\mathbb{R}^n)$ to
$L^1(\mathbb{R}^n)$.

{\it Proof of part (ii)}. Let $f\in H^\Phi(\mathbb{R}^n)$. Since $g_\alpha$
is of weak type $(1,\,1)$, Lemma \ref{lem3.1} (i) and (ii) yield
$$\begin{array}{ll}
&\|g_{\alpha,b}(f)\|_{L^{1,\infty}(\mathbb{R}^n)}\lesssim \|g_\alpha(\Pi_4(f,b))\|_{L^{1,\infty}(\mathbb{R}^n)}+\|\mathcal{R}(f,b)\|_{L^{1,\infty}(\mathbb{R}^n)}\\
&\qquad\qquad\qquad\quad\lesssim \|\Pi_4(f,b)\|_{L^1(\mathbb{R}^n)}+\|\mathcal{R}(f,b)\|_{L^1(\mathbb{R}^n)}\\
&\qquad\qquad\qquad\quad\lesssim \|f\|_{H^\Phi(\mathbb{R}^n)}\|b\|_{\mathrm{BMO}_{\Phi}(\mathbb{R}^n)}.
\end{array}$$
Therefore, $g_{\alpha,b}$ is bounded from $H^\Phi(\mathbb{R}^n)$ to
$L^{1,\infty}(\mathbb{R}^n)$. This completes the proof of part (ii).

{\it Proofs of parts (iii) and (iv)}. By the pointwise comparability
between $S_{\alpha}$ and $g_{\alpha}$, parts (iii) and (iv) follow
directly from parts (i) and (ii).

{\it Proofs of parts (v) and (vi)}. By Lemma \ref{lem3.3}, we see that
$g_{\lambda,\alpha}^{*}\in\mathcal{K}_{\Phi}$. The desired conclusions
follow from Lemma \ref{lem3.1} and the arguments similar to those used
to derive parts (i) and (ii). The details are omitted.
\end{proof}

\medskip
\noindent{\bf Acknowledgments.}
Yanyan Han is supported by the National Natural Science Foundation of China (grant No. 12526513), and the Open Research Fund of Hubei Key Laboratory of Mathematical
Sciences (Central China Normal University), Wuhan 430079, P. R. China. Feng Liu
is supported by the Natural Science Foundation of Shandong Province (grant No.
ZR2023MA022) and the National Natural Science Foundation of China (grant No. 12326371). Yongming Wen is supported by Fujian Provincial Natural Science Foundation of China (grant No. 2025J01361).
Huoxiong Wu is supported by the National Natural Science Foundation of China (grant
No. 12271041).

\medskip

\noindent {\bf Compliance with Ethical Standard}

\medskip

\noindent {Conflict of Interests:} The authors declare that there is no conflict of interests regarding the publication of this paper.

\medskip

\noindent {Data Availability Statements:} No datasets were generated or analysed during the current study.

\end{document}